\renewcommand*{\backref}[1]{}\renewcommand*{\backrefalt}[4]{\ifcase #1 (\tt not cited)\or (\tt cited on page~#2)\else (\tt cited on pages~#2)\fi}
\date{}
\renewcommand{\uppercasenonmath}[1]{}
\theoremstyle{plain}
\newtheorem{theorem}{Theorem}[section]
\newtheorem{proposition}[theorem]{Proposition}
\newtheorem{lemma}[theorem]{Lemma}
\newtheorem{corollary}[theorem]{Corollary}
\theoremstyle{definition}
\newtheorem{example}[theorem]{Example}
\newtheorem{definition}[theorem]{Definition}
\theoremstyle{definition}
\theoremstyle{remark}
\newtheorem{remark}[theorem]{Remark}
\newcommand{\pf}{\noindent\begin {proof}}
\newcommand{\epf}{\end{proof}}
\newcommand{\Ker}{\mbox{\rm Ker}}
\newcommand{\Ext}{\mbox{\rm Ext}}
\newcommand{\Hom}{\mbox{\rm Hom}}
\newcommand{\Tor}{\mbox{\rm Tor}}
\newcommand{\Id}{\mathrm{Id}}
\def\m{{\frak m}}
\def\GV{{\rm GV}}
\def\tor{{\rm tor_{\rm GV}}}
\def\Hom{{\rm Hom}}
\def\Ext{{\rm Ext}}
\def\Tor{{\rm Tor}}
\def\ker{{\rm ker}}
\def\Ker{{\rm Ker}}
\def\GV{{\rm GV}}
\def\Max{{\rm Max}}
\def\fkm{{\mathfrak{m}}}
\begin{document}

\title[Characterizations of Pr\"ufer $v$-Multiplication Domains]{ Module-Theoretic Characterizations of Pr\"ufer $v$-Multiplication Domains}

\author [Zhang] {Xiaolei Zhang}

\author [Kim] {Hwankoo Kim$^\dag$}

\date{\today}
\subjclass[2000]{13C10}
\keywords{P$v$MD; $w$-pure submodule; $w$-projective module; $w$-split module}
\thanks{$\dag$: Corresponding author}

\maketitle

\begin{abstract}
	We present unified $w$-theoretic characterizations of Pr\"ufer $v$-multiplication domains (P$v$MDs). 
	A module-theoretic perspective shows that torsion submodules are $w$-pure, and for $(w$-)$\,$finitely generated modules $M$, the canonical sequence 
	$0\to T(M)\to M\to M/T(M)\to 0$ $w$-splits, resolving an open question of Geroldinger--Kim--Loper. 
	In a $w$-version of Hattori-Davis theory, these conditions are equivalent to $\Tor^R_2(M,N)$ being $\GV$-torsion for all $R$-modules $M,N$, equivalently $w$-w.gl.dim$(R)\le 1$, or $\Tor^R_1(X,A)$ being $\GV$-torsion for all $X$ and torsion-free $A$, or the Davis map $A\otimes_R B \to \mathcal T\otimes_K \mathcal S$ having $\GV$-torsion kernel. 
	From an overring viewpoint, $R$ is a P$v$MD if and only if for every $R\subseteq T\subseteq K$ and every $w$-maximal ideal $\fkm$, the localization $R_{\fkm}\to T_{\fkm}$ is a flat epimorphism, so that each overring is $w$-flat and the inclusion is $w$-epimorphic. 
	Finally, $R$ is a P$v$MD if and only if every pure $w$-injective divisible $R$-module is injective.
\end{abstract}

\section{introduction}

Throughout, $R$ denotes an integral domain with $1\neq 0$, and all modules are unitary.

Pr\"ufer introduced in 1932 those domains in which every nonzero finitely generated ideal is invertible; Krull later named them \emph{Pr\"ufer domains} \cite{P32,K37}. Since then a rich module-theoretic theory has developed (see \cite{FHP97}). In the last decades, following Gilmer’s star-operation framework \cite{G72}, many classical properties were transported from Pr\"ufer domains to their $v$/$t$/$w$ analogues. In particular, Pr\"ufer $v$-multiplication domains (P$v$MDs), originating with Griffin \cite{G67}, are those domains in which every nonzero finitely generated ideal is $v$-invertible (equivalently $t$- or $w$-invertible); they play the role of the $v$- (or $t$-, $w$-) version of Pr\"ufer domains. See, e.g., \cite{wq15,xw18,XW18} for module-theoretic characterizations in this milieu.

\smallskip
\noindent\textbf{Aim and main results.}  
This paper develops a coherent $w$-theoretic package of criteria that are \emph{equivalent} to the P$v$MD property, drawn from four classical directions:

\begin{itemize}
	\item \emph{Torsion-submodule criteria (resolving an open question):}  
	For any $R$-module $M$, the torsion submodule $T(M)$ is $w$-pure, and for every $(w$-)$\,\!$finitely generated module $M$, the canonical exact sequence
	\[
	0 \longrightarrow T(M) \longrightarrow M \longrightarrow M/T(M) \longrightarrow 0
	\]
	$w$-splits. This settles an open question posed by Geroldinger--Kim--Loper (\cite{ghl25}).
	
	\item \emph{$w$-Hattori-Davis $($homological/tensor$) $ criteria:}  
	Theorem~\ref{thm:P$v$MD-Hattori-Davis} shows that $R$ is a P$v$MD if and only if 
	$\Tor^R_2(M,N)$ is $\GV$-torsion for all $R$-modules $M,N$ 
	(equivalently, $w$-w.gl.dim$(R)\le 1$), 
	if and only if $\Tor^R_1(X,A)$ is $\GV$-torsion for all $R$-modules $X$ and torsion-free $A$, 
	and if and only if the Davis map has $\GV$-torsion kernel for all $R$-subalgebras $A,B$ of $K$.
	
	\item \emph{$w$-local flat-epimorphism criterion for overrings:}  
	Theorem~\ref{thm:pvmd-flat-epi} proves that $R$ is a P$v$MD if and only if, 
	for every overring $R\subseteq T\subseteq K$ and every $\fkm\in w$-$\Max(R)$, 
	the localized inclusion $R_{\fkm}\to T_{\fkm}$ is a flat epimorphism.  
	In particular, each overring is $w$-flat and the inclusion $R\hookrightarrow T$ is a $w$-epimorphism.
	
	\item \emph{Pure $w$-injective module characterization:}  
	Theorem~\ref{thm:pvmd-winj} shows that $R$ is a P$v$MD if and only if every pure $w$-injective divisible $R$-module is injective.   
	This provides a module-theoretic perspective on P$v$MDs, linking them to Fuchs’ classical result on injectivity of divisible modules.
\end{itemize}

\smallskip
\noindent\textbf{Notation and preliminaries.}
We briefly recall the $w$-setup used throughout (details and proofs can be found in \cite{kw14,wq20,WK15,wk24,xw18}).  
A finitely generated ideal $J\subseteq R$ is a \emph{\GV-ideal} (write $J\in \GV(R)$) if the natural map $R\!\to\!\Hom_R(J,R)$ is an isomorphism.  
For an $R$-module $M$, set
\[
\tor(M):=\{\,x\in M\mid Jx=0\ \text{for some }J\in\GV(R)\,\}.
\]
Then $M$ is \emph{\GV-torsion} if $\tor(M)=M$ and \emph{\GV-torsion-free} if $\tor(M)=0$.  
The \emph{$w$-envelope} of a \GV-torsion-free $M$ is
\[
M_w:=\{\,x\in E(M)\mid Jx\subseteq M\ \text{for some }J\in\GV(R)\,\},
\]
with $E(M)$ the injective envelope; a \GV-torsion-free $M$ is a \emph{$w$-module} if $M_w=M$.  
Maximal $w$-ideals (maximal $w$-submodules of $R$) are prime; their set is $w$-$\Max(R)$ \cite[Thm.~6.2.14]{wk24}.  
A sequence is \emph{$w$-exact} if it becomes exact after localizing at every $\fkm\in w$-$\Max(R)$.  
An $R$-module $M$ is \emph{$w$-finitely generated} if it receives a $w$-epimorphism from a finitely generated free module.  
$M$ is \emph{$w$-flat} iff $\Tor^R_1(N,M)$ is \GV-torsion for all $N$ \cite[Thm.~3.3]{kw14}; \emph{$w$-projective} iff $\Ext^1_R(L(M),N)$ is \GV-torsion for all torsion-free $w$-modules $N$, where $L(M)=(M/\tor(M))_w$ \cite{WK15}.  
A short exact sequence is \emph{$w$-split} if it splits after multiplying by some $J\in\GV(R)$ in the sense of \cite{wq20}; $w$-split $\Rightarrow$ $w$-pure \cite{xw18}, and $w$-split $\Rightarrow$ $w$-projective $\Rightarrow$ $w$-flat (the converses generally fail).

These conventions will be in force for the remainder of the paper.

\section{Characterizing P$v$MDs via $w$-Purity and $w$-Projectivity}

In this section, we provide a homological characterization of P$v$MDs in terms of torsion submodules and $w$-projectivity. 
Our main theorem gives several equivalent conditions for an integral domain to be a P$v$MD, 
thus resolving an open question posed by Geroldinger, Kim, and Loper \cite{ghl25}. 
The characterization connects module-theoretic properties, such as $w$-purity and $w$-split exact sequences, 
with the ideal-theoretic structure of the domain, offering a new perspective on P$v$MDs.

Following Wang-Kim \cite{WK15}, the \emph{$w$-Nagata ring} of $R$ is
\[
R\{x\}\ :=\ R[X]_{S_w},\qquad
S_w\ :=\ \{\,f\in R[X]\mid c(f)\in \GV(R)\,\},
\]
where $c(f)$ denotes the content ideal of $f$.  
Similarly, for an $R$-module $M$, the \emph{$w$-Nagata module} is
\[
M\{x\}\ :=\ M[X]_{S_w}\ \cong\ M\otimes_R R\{x\}.
\]
The next lemma records the basic transfer between the $w$-setting and the classical setting via the functor $(-)\{x\}$.

\begin{lemma}\label{Nag}
	Let $R$ be a ring. Then:
	\begin{enumerate}
		\item[\textup{(1)}] \textup{\cite[Theorem 3.9]{WK15}} An $R$-module $M$ is $w$-finitely generated if and only if $M\{x\}$ is finitely generated over $R\{x\}$.
		\item[\textup{(2)}] \textup{\cite[Lemma 4.1]{wq15}} An $R$-module $M$ is $w$-flat if and only if $M\{x\}$ is flat over $R\{x\}$.
		\item[\textup{(3)}] \textup{\cite[Theorem 3.11]{WK15}} An $R$-module $M$ is $w$-finitely generated and $w$-projective if and only if $M\{x\}$ is finitely generated projective over $R\{x\}$.
	\end{enumerate}
\end{lemma}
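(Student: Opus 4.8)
The plan is short: all three equivalences are already in the literature, so for (1) and (3) I would cite \cite[Theorems 3.9 and 3.11]{WK15} and for (2) \cite[Lemma 4.1]{wq15}. What is worth recording, though, is the single mechanism underlying all of them. The extension $R\to R\{x\}$ is flat (it is a localization of $R[X]$), $M\{x\}=M\otimes_R R\{x\}$, and $T\{x\}=0$ whenever $T$ is $\GV$-torsion: given a polynomial over $T$, multiply it by an $f\in S_w$ whose content is the product of the $\GV$-ideals annihilating the coefficients of that polynomial, so that $f$ kills it. Since $(-)\{x\}$ is exact, it therefore carries $w$-exact sequences to exact sequences. Conversely, $\Max(R\{x\})=\{\fkm\{x\}\mid \fkm\in w\text{-}\Max(R)\}$, with $R\{x\}_{\fkm\{x\}}\cong R_{\fkm}(X)$ and $M\{x\}_{\fkm\{x\}}\cong M_{\fkm}(X)$, and $R_{\fkm}(X)$ is faithfully flat over $R_{\fkm}$; these identifications are what let one descend ``flat\,/\,finitely generated\,/\,projective over $R\{x\}$'' to the corresponding $w$-property over $R$.

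For (1), the forward implication is immediate: a $w$-epimorphism $R^n\to M$ from a finitely generated free module becomes, after applying $(-)\{x\}$, an epimorphism $R\{x\}^n\to M\{x\}$. For the converse, pick generators $\xi_1,\dots,\xi_n$ of $M\{x\}$; clearing $S_w$-denominators one may take $\xi_i\in M[X]$, and if $N\subseteq M$ is the finitely generated submodule spanned by all coefficients of all the $\xi_i$, then $N\{x\}=M\{x\}$, hence $(M/N)\{x\}=0$, hence $M/N$ is $\GV$-torsion; so $N\hookrightarrow M$ is a $w$-epimorphism and $M$ is $w$-finitely generated. For (2), the converse is flat base change for $\Tor$: $\Tor^R_1(N,M)\otimes_R R\{x\}\cong\Tor^{R\{x\}}_1(N\{x\},M\{x\})$, so flatness of $M\{x\}$ over $R\{x\}$ forces every $\Tor^R_1(N,M)$ to vanish under $(-)\{x\}$, i.e.\ to be $\GV$-torsion. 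The forward implication of (2) is checked at the maximal ideals $\fkm\{x\}$ of $R\{x\}$, where it reads ``$M_{\fkm}(X)$ flat over $R_{\fkm}(X)$'', which follows from ``$M_{\fkm}$ flat over $R_{\fkm}$'' --- a consequence of $w$-flatness of $M$ --- by base change along $R_{\fkm}\to R_{\fkm}(X)$.

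Finally (3) is obtained by combining (1) and (2) with the internal description of $w$-projective modules in \cite{WK15}: once (1) has made $M\{x\}$ finitely generated, the $w$-projective\,/\,$w$-flat dictionary matches ``$M\{x\}$ finitely generated projective over $R\{x\}$'' with ``$M$ $w$-projective''. As for the main obstacle: at the level of this paper there is none, since we are only quoting these results. The genuinely non-formal ingredient, which lives in the source papers, is the computation of $\Max(R\{x\})$ together with the identifications $R\{x\}_{\fkm\{x\}}\cong R_{\fkm}(X)$ and $M\{x\}_{\fkm\{x\}}\cong M_{\fkm}(X)$; this is exactly what powers the descent direction in each of (1)--(3).
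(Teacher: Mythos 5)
Your proposal matches the paper exactly: Lemma~\ref{Nag} is stated purely as a citation of \cite[Theorems 3.9 and 3.11]{WK15} and \cite[Lemma 4.1]{wq15}, and the paper supplies no proof of its own. Your additional sketch of the underlying mechanism (exactness of $(-)\{x\}$, vanishing on $\GV$-torsion, and descent via $\Max(R\{x\})=\{\fkm\{x\}\mid\fkm\in w\text{-}\Max(R)\}$) is a correct outline of what the cited sources do, but it is extra material not present in the paper.
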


Recall \cite{EJ00} that an injective $R$-module $E$ is an \emph{injective cogenerator} if for every nonzero $R$-module $M$ there exists a nonzero homomorphism $f:M\to E$.
For the $w$-Nagata ring $R\{x\}$, one has
\[
\{\ \fkm\{x\}\mid \fkm\in w\text{-}\Max(R)\ \}
\]
as the set of all maximal ideals of $R\{x\}$ \cite[Proposition 6.6.16(2)]{wk24}. Set
\[
E'\ :=\ \prod_{\fkm\in w\text{-}\Max(R)} E_R\!\big(R\{x\}/\fkm\{x\}\big),
\]
where $E_R(-)$ denotes the injective envelope regarded as an $R$-module. Since each $R\{x\}/\fkm\{x\}$ is a $w$-module over $R$, the product $E'$ is an injective $w$-module over $R$. Define
\[
\widetilde E\ :=\ \Hom_R\!\big(R\{x\},E'\big).
\]
Then $\widetilde E$ is naturally an $R\{x\}$-module and, by \cite[Lemma 3.10]{zw21}, it is an injective cogenerator of the category of $R\{x\}$-modules.

\begin{lemma}\label{cog-tor}
	\textup{\cite[Corollary 3.11]{zw21}}
	Let $M$ be an $R$-module. The following are equivalent:
	\begin{enumerate}
		\item[\textup{(i)}] $M$ is $\GV$-torsion;
		\item[\textup{(ii)}] $\Hom_R(M,E)=0$ for every injective $w$-module $E$;
		\item[\textup{(iii)}] $\Hom_R(M,\widetilde E)=0$.
	\end{enumerate}
\end{lemma}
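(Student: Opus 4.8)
The plan is to prove the two biconditionals (i) $\Leftrightarrow$ (ii) and (i) $\Leftrightarrow$ (iii) separately: the first by soft torsion-theoretic arguments, and the second by transporting $\Hom_R(M,\widetilde E)$ across hom--tensor adjunction to the $w$-Nagata ring $R\{x\}$, where the cogenerator property of $\widetilde E$ together with the description of the maximal ideals of $R\{x\}$ carries the argument.

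For (i) $\Rightarrow$ (ii) I would use only that an injective $w$-module $E$ is $\GV$-torsion-free (this is part of the definition of a $w$-module): the image of any $f\colon M\to E$ is a quotient of the $\GV$-torsion module $M$, hence $\GV$-torsion, and a submodule of the $\GV$-torsion-free module $E$, hence $\GV$-torsion-free, so $f=0$. For the converse (ii) $\Rightarrow$ (i) I would argue contrapositively: if $M$ is not $\GV$-torsion, then $M/\tor(M)\neq 0$ is $\GV$-torsion-free, so its injective envelope $E(M/\tor(M))$ is injective and $\GV$-torsion-free, hence an injective $w$-module, and the composite $M\twoheadrightarrow M/\tor(M)\hookrightarrow E(M/\tor(M))$ is a nonzero element of $\Hom_R\big(M,E(M/\tor(M))\big)$, contradicting (ii).

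The substantive part is (i) $\Leftrightarrow$ (iii). Using $\widetilde E=\Hom_R(R\{x\},E')$ and $M\{x\}=M\otimes_R R\{x\}$, I would first record the natural isomorphisms
\[
\Hom_R(M,\widetilde E)\;\cong\;\Hom_R\!\big(M\otimes_R R\{x\},E'\big)\;\cong\;\Hom_{R\{x\}}\!\big(M\{x\},\widetilde E\big),
\]
the first being the hom--tensor adjunction and the second the adjunction between restriction of scalars and $\Hom_R(R\{x\},-)$ along $R\to R\{x\}$. Since $\widetilde E$ is an injective cogenerator of the category of $R\{x\}$-modules, the right-hand group vanishes precisely when $M\{x\}=0$. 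It then remains to check the standard equivalence between $M\{x\}=0$ and $M$ being $\GV$-torsion. For the forward direction I would localize at each $\fkm\in w\text{-}\Max(R)$, using that the maximal ideals of $R\{x\}$ are exactly the $\fkm\{x\}$ and that $(-)\{x\}$ is compatible with localization, to conclude $M_\fkm=0$ for all such $\fkm$; for the reverse, a relation $Jm=0$ with $J=(a_0,\dots,a_n)\in\GV(R)$ kills $m$ in $M[X]_{S_w}$ via the polynomial $a_0+a_1X+\cdots+a_nX^n\in S_w$. Chaining these, (iii) holds iff $M\{x\}=0$ iff (i) holds.

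I do not expect a genuine obstacle; the points needing care are purely bookkeeping, namely keeping the $R$- and $R\{x\}$-module structures straight through the adjunction chain and correctly invoking the equivalence between $M\{x\}=0$ and $M$ being $\GV$-torsion (which itself depends on the identification of $w\text{-}\Max(R\{x\})$ quoted in the preamble). As an alternative route that bypasses reproving that equivalence, one may observe that $\widetilde E$ is itself an injective $w$-module over $R$ --- it is $R$-injective because $R\{x\}$ is $R$-flat and $E'$ is $R$-injective, and it is a $w$-module over $R$ because every $R\{x\}$-module is one over $R$ --- so that (ii) $\Rightarrow$ (iii) is merely the instance $E=\widetilde E$ of (ii), while (iii) $\Rightarrow$ (i) is still supplied by the cogenerator argument above.
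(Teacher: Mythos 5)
Your proposal is correct. Note, however, that the paper offers no proof of this lemma at all: it is imported verbatim as \cite[Corollary 3.11]{zw21}, so there is no in-paper argument to compare against. Your reconstruction follows the natural two-pronged route one would expect the cited source to take: (i)$\Leftrightarrow$(ii) by the torsion-theoretic observation that a $w$-module is $\GV$-torsion-free (so images of $\GV$-torsion modules die) together with the fact that $E(M/\tor(M))$ is a $\GV$-torsion-free injective, hence a $w$-module; and (i)$\Leftrightarrow$(iii) by the adjunction chain $\Hom_R(M,\widetilde E)\cong\Hom_{R\{x\}}(M\{x\},\widetilde E)$ and the cogenerator property of $\widetilde E$. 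Both prongs are sound. Two small remarks: the forward implication ``$M\{x\}=0\Rightarrow M$ is $\GV$-torsion'' does not require localizing at $w$-maximal ideals --- if $m/1=0$ in $M[X]_{S_w}$ then $fm=0$ for some $f\in S_w$, which says exactly $c(f)m=0$ with $c(f)\in\GV(R)$, so the equivalence with (i) is immediate from the definition of $S_w$; and in your alternative route the assertion that every $R\{x\}$-module is a $w$-module over $R$ is true but not free of content (it is one of the standard transfer results in \cite{wk24}), so if you use that shortcut you should cite it rather than treat it as obvious.
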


\begin{proposition}\label{wf-wp}
	Let $R$ be an integral domain. Then every $w$-finitely generated $w$-flat $R$-module is $w$-projective.
\end{proposition}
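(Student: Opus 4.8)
The statement is a $w$-analogue of the classical fact that a finitely generated flat module over an integral domain is projective, and the plan is to deduce it from that fact by transporting everything to the $w$-Nagata ring $R\{x\}$. Let $M$ be a $w$-finitely generated $w$-flat $R$-module. By Lemma~\ref{Nag}(1), $M\{x\}$ is finitely generated over $R\{x\}$, and by Lemma~\ref{Nag}(2) it is flat over $R\{x\}$. Since $R$ is an integral domain, $R[X]$ is a domain; and because $c(0)=(0)\notin\GV(R)$ we have $0\notin S_w$, so $R\{x\}=R[X]_{S_w}$ is again an integral domain. Granting that a finitely generated flat module over a domain is projective, $M\{x\}$ is finitely generated projective over $R\{x\}$, whence Lemma~\ref{Nag}(3) yields at once that $M$ is $w$-projective. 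Thus the proposition reduces entirely to this classical fact.

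For completeness I would recall why the classical fact holds. Let $D$ be a domain with quotient field $F$ and $N\neq 0$ a finitely generated flat $D$-module. For each prime $\fkm$, $N_\fkm$ is finitely generated and flat over the local ring $D_\fkm$, hence free; localizing further to the generic point gives $N_\fkm\otimes_{D_\fkm}F\cong N\otimes_D F$, so the rank of $N_\fkm$ equals $r:=\dim_F(N\otimes_D F)$, independently of $\fkm$ (and $r\ge 1$, since a nonzero finitely generated flat, hence torsion-free, module has nonzero generic fibre). Fix a maximal ideal $\fkm$; among a fixed finite generating set of $N$ choose $r$ elements whose images form a basis of $N_\fkm$. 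The submodule $N'$ they generate has $(N/N')_\fkm=0$, so $N'[1/s]=N[1/s]$ for some $s\notin\fkm$. Then $D[1/s]^{\,r}\twoheadrightarrow N[1/s]$ is a surjection between modules that are free of rank $r$ at every prime of $D[1/s]$, hence (being a surjective endomorphism of a finitely generated module, locally) an isomorphism, so $N[1/s]$ is free. Letting $\fkm$ vary, the resulting elements $s$ generate the unit ideal, so finitely many of them, $s_1,\dots,s_k$, already do; thus $N$ is finitely generated and free on the finite open cover $\{D(s_i)\}$ of $\mathrm{Spec}\,D$, hence finitely presented, hence projective.

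The only genuinely non-formal input is ``finitely generated flat over a local ring is free'' (used to get constant local rank); everything else is bookkeeping with the dictionary of Lemma~\ref{Nag}. The step that most needs care is the passage from ``$N$ becomes free after inverting each $s_\fkm$'' to ``$N$ is projective'': this uses crucially that $N$ is finitely generated, so that a finite subcover exists and finite presentation follows, and then flatness upgrades finite presentation to projectivity. I expect no serious obstacle beyond making sure $R\{x\}$ really is a domain, which is exactly the point where the hypothesis that $R$ is a domain (rather than a general ring) enters.
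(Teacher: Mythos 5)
Your proof is correct and follows exactly the paper's route: transfer to the $w$-Nagata ring via Lemma~\ref{Nag}(1)--(3), observe that $R\{x\}$ is a domain, and invoke the classical fact that a finitely generated flat module over a domain is projective. The only difference is that the paper simply cites \cite[Lemma 3]{PR04} for that classical fact, whereas you supply a (correct) proof of it.
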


\begin{proof}
	Let $M$ be a $w$-finitely generated $w$-flat $R$-module. By Lemma~\ref{Nag}\,(1)-(2), $M\{x\}$ is a finitely generated flat $R\{x\}$-module (see also \cite[Theorem 6.6.4]{wk24}). Since $R\{x\}$ is an integral domain, $M\{x\}$ is finitely generated projective over $R\{x\}$ \cite[Lemma 3]{PR04}. Hence, by Lemma~\ref{Nag}\,(3) (or \cite[Theorem 6.7.29]{wk24}), $M$ is $w$-projective.
\end{proof}

\begin{proposition}\label{w-sp-w-pure}
	Let $R$ be a commutative ring $($not necessarily an integral domain$)$. Then the following statements hold:
	\begin{enumerate}
		\item Every $w$-split short exact sequence is $w$-pure.
		\item The direct limit of $w$-pure short exact sequences is also $w$-pure.
	\end{enumerate}	
\end{proposition}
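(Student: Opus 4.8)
The plan is to reduce each part to a short homological argument built on the right concrete form of $w$-purity; nothing below uses that $R$ is a domain, which is the point of the stated generality. Recall that a short exact sequence $0\to A\to B\to C\to 0$ with structure maps $f\colon A\to B$, $g\colon B\to C$ is $w$-pure exactly when, for every $R$-module $N$, the kernel of $f\otimes_R N\colon A\otimes_R N\to B\otimes_R N$ is $\GV$-torsion (equivalently, $0\to A\otimes_R N\to B\otimes_R N\to C\otimes_R N\to 0$ is $w$-exact); and that the sequence is $w$-split exactly when there are $J=(a_1,\dots,a_n)\in\GV(R)$ and homomorphisms $s_i\colon C\to B$ with $g\circ s_i=a_i\cdot\mathrm{id}_C$ for all $i$ --- the concrete meaning of ``splits after multiplying by $J$'' in the sense of \cite{wq20}, i.e.\ the extension class in $\Ext^1_R(C,A)$ is annihilated by $J$.

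For part~(1), I would fix an arbitrary $R$-module $N$ and tensor the sequence with $N$. In the resulting long exact $\Tor$-sequence there is a connecting homomorphism $\partial\colon\Tor^R_1(C,N)\to A\otimes_R N$ with image $\Ker\big(f\otimes_R N\colon A\otimes_R N\to B\otimes_R N\big)$, immediately preceded by $g_*\colon\Tor^R_1(B,N)\to\Tor^R_1(C,N)$ with $\partial\circ g_*=0$. It then suffices to show $\im(\partial)$ is $\GV$-torsion, and the key point is that $\partial$ is annihilated by $J$: for each generator $a_i$, multiplication by $a_i$ on $\Tor^R_1(C,N)$ equals $\Tor^R_1(a_i\cdot\mathrm{id}_C,N)=\Tor^R_1(g\circ s_i,N)=g_*\circ\Tor^R_1(s_i,N)$, whence $a_i\,\partial=\partial\circ g_*\circ\Tor^R_1(s_i,N)=0$. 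Therefore every element of $\im(\partial)=\Ker(f\otimes_R N)$ is killed by $J\in\GV(R)$, so $\Ker(f\otimes_R N)$ is $\GV$-torsion; since $N$ was arbitrary, the sequence is $w$-pure. (If one prefers the finitely presented $\Hom$-description of $w$-purity, the same data work even more directly: for finitely presented $F$ and any $\varphi\colon F\to C$ one has $a_i\varphi=g\circ(s_i\varphi)$, so $J\varphi$ lifts to $B$ and $\Coker\big(\Hom_R(F,B)\to\Hom_R(F,C)\big)$ is $\GV$-torsion.)

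For part~(2), let $\{\,0\to A_\lambda\to B_\lambda\to C_\lambda\to 0\,\}_{\lambda\in\Lambda}$ be a direct system of $w$-pure short exact sequences. Since direct limits are exact, the colimit $0\to A\to B\to C\to 0$ is again short exact. Fix an $R$-module $N$; since $-\otimes_R N$ commutes with direct limits, the map $A\otimes_R N\to B\otimes_R N$ is the direct limit of the maps $A_\lambda\otimes_R N\to B_\lambda\otimes_R N$, and exactness of direct limits yields
\[
\Ker\big(A\otimes_R N\to B\otimes_R N\big)\;\cong\;\varinjlim_{\lambda}\Ker\big(A_\lambda\otimes_R N\to B_\lambda\otimes_R N\big).
\]
Each term on the right is $\GV$-torsion by hypothesis, and a direct limit of $\GV$-torsion modules is $\GV$-torsion (an element of the colimit is represented at some stage, where it is annihilated by a $\GV$-ideal). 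Hence $\Ker(A\otimes_R N\to B\otimes_R N)$ is $\GV$-torsion for every $N$, so the colimit sequence is $w$-pure.

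Neither step is deep. The closest thing to an obstacle is preparatory: one must pin down the precise meaning of $w$-split from \cite{wq20} (the section data $s_i$ used above) and phrase $w$-purity in the tensor (equivalently, finitely presented $\Hom$) form, which is the form preserved by the operations in play; one also uses the standard facts that a module is $\GV$-torsion iff it vanishes after localizing at every $\fkm\in\wmax(R)$ and that the $\GV$-torsion class is closed under direct limits. With these in hand, part~(1) is a one-line diagram chase and part~(2) a direct-limit bookkeeping, neither needing the integral-domain hypothesis.
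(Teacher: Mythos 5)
Your proof is correct, and part (2) coincides with the paper's argument (commute $-\otimes_R N$ with the direct limit, use exactness of filtered colimits to identify the kernel with the colimit of the kernels, and note that $\GV$-torsion modules are closed under direct limits). For part (1) your main route is genuinely different from the paper's: you work with the long exact $\Tor$ sequence and show that the connecting map $\partial\colon\Tor^R_1(C,N)\to A\otimes_R N$ is annihilated by $J$, because multiplication by each generator $a_i$ on $\Tor^R_1(C,N)$ factors through $g_*$ via the partial sections $s_i$ and $\partial\circ g_*=0$; this verifies the tensor form of $w$-purity directly, for all $N$ at once, without invoking the equivalence of the tensor and finitely-presented-$\Hom$ characterizations. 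The paper instead takes a finitely presented $N$ and a map $s\colon N\to C$, lifts each $d_k s=g(h_k s)$ to $B$, concludes that $\Coker\Hom_R(N,g)$ is $\GV$-torsion, and then cites \cite[Theorem 2.5]{XW18} to pass from the $\Hom$-criterion to $w$-purity --- which is precisely the argument you sketch in your parenthetical remark. Both are valid; your $\Tor$ argument is slightly more self-contained homologically, while the paper's is shorter given the cited equivalence. One small point of hygiene: your argument only uses that the extension is annihilated by $J$ \emph{via} the section data $g\circ s_i=a_i\cdot\Id_C$; if one took ``$w$-split'' to mean merely that the class in $\Ext^1_R(C,A)$ is killed by $J$, the existence of the $s_i$ still follows from the pushout description of $a_i\cdot\zeta$ being split, so no gap arises, but it is worth being explicit that the section data is what the definition in \cite{wq20} supplies.
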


\begin{proof}
	(1) Let 
	\[
	\zeta:\ 0 \longrightarrow A \xrightarrow{f} B \xrightarrow{g} C \longrightarrow 0
	\]
	be a $w$-split short exact sequence. Then there exist $J=\langle d_1,\dots,d_n\rangle\in \GV(R)$ and $h_1,\dots,h_n\in\Hom_R(C,B)$ such that 
	\[
	d_k\cdot \Id_C = g h_k \quad \text{for all } k=1,\dots,n.
	\]
	Let $N$ be a finitely presented $R$-module and $s:N\to C$ an $R$-homomorphism. For each $d_k\in J$, we have
	\[
	d_k s = g h_k s = g t_k,
	\]
	where $t_k := h_k s : N\to B$. Consequently, the natural sequence
	\[
	0\to \Hom_R(N,A)\xrightarrow{\Hom_R(N,f)}\Hom_R(N,B)\xrightarrow{\Hom_R(N,g)}\Hom_R(N,C)\to 0
	\]
	is $w$-exact. Therefore, by \cite[Theorem 2.5]{XW18}, $\zeta$ is a $w$-pure short exact sequence.
	
	\medskip
	(2) Let 
	\[
	\{\zeta_i:\ 0\to A_i\xrightarrow{f_i}B_i\xrightarrow{g_i}C_i\to 0\mid i\in\Gamma\}
	\]
	be a direct system of $w$-pure short exact sequences of $R$-modules. For any $R$-module $M$, consider the following commutative diagram:
	\[
	\xymatrix@C=40pt@R=40pt{
		0 \ar[r] & M\otimes_R\varinjlim\limits_{i\in\Gamma}A_i \ar[d]^{\cong} \ar[r]^{M\otimes_R\varinjlim f_i} &
		M\otimes_R\varinjlim\limits_{i\in\Gamma}B_i \ar[d]^{\cong} \ar[r]^{M\otimes_R\varinjlim g_i} &
		M\otimes_R\varinjlim\limits_{i\in\Gamma}C_i \ar[d]^{\cong} \ar[r] & 0 \\
		0 \ar[r] & \varinjlim\limits_{i\in\Gamma}(M\otimes_R A_i) \ar[r]^{\varinjlim(M\otimes f_i)} &
		\varinjlim\limits_{i\in\Gamma}(M\otimes_R B_i) \ar[r]^{\varinjlim(M\otimes g_i)} &
		\varinjlim\limits_{i\in\Gamma}(M\otimes_R C_i) \ar[r] & 0.
	}
	\]
	The vertical maps are natural isomorphisms. Since each $\zeta_i$ is $w$-pure, the sequence $M\otimes_R\zeta_i$ is $w$-exact; hence $\Ker(M\otimes_Rf_i)$ is \GV-torsion for all $i$. Therefore,
	\[
	\Ker\Big(\varinjlim\limits_{i\in\Gamma}(M\otimes_Rf_i)\Big)
	\]
	is \GV-torsion, implying that the bottom row, and hence the top row, is $w$-exact. Consequently, $\varinjlim\limits_{i\in\Gamma}\zeta_i$ is a $w$-pure short exact sequence.
\end{proof}

\medskip

Let $R$ be an integral domain with quotient field $Q$, and let $I$ be a nonzero ideal of $R$.  
We say that $I$ is \emph{$w$-invertible} if
\[
(II^{-1})_w = R,
\]
where $I^{-1} := \{\,x\in Q \mid Ix\subseteq R\,\}$.  
Recall from \cite[Theorem 7.5.9]{wk24} that an integral domain $R$ is a P$v$MD iff every nonzero finitely generated ideal of $R$ is $w$-invertible. Thus, P$v$MDs may be regarded as $w$-analogues (or $v$-analogues) of Pr\"ufer domains.

It is well known that an integral domain $R$ is Pr\"ufer iff the torsion submodule of every $R$-module is pure,  
equivalently, every finitely generated $R$-module decomposes as a direct sum of a torsion module and a projective module,  
or, equivalently, for every finitely generated $R$-module $M$, the canonical exact sequence
\[
0\longrightarrow T\longrightarrow M\longrightarrow M/T\longrightarrow 0,
\]
where $T$ is the torsion submodule of $M$, is split (see \cite[Chapter I, Prop.\ 8.12; Chapter V, Cor.\ 2.9]{FS01}).  
In the next theorem, we establish a $w$-analogue of these classical results.

\begin{theorem}\label{main}
	Let $R$ be an integral domain. Then the following statements are equivalent:
	\begin{enumerate}
		\item $R$ is a P$v$MD.
		\item The torsion submodule of any $($finitely generated$)$ $R$-module is a $w$-pure submodule.
		\item The quotient of a finitely generated $R$-module by its torsion submodule is $w$-projective.
		\item The quotient of a $w$-finitely generated $R$-module by its torsion submodule is $w$-projective.
		\item The quotient of a finitely generated $R$-module by its torsion submodule is $w$-split.
		\item The quotient of a $w$-finitely generated $R$-module by its torsion submodule is $w$-split.
		\item For every finitely generated $R$-module $M$, the canonical exact sequence 
		\[
		0 \longrightarrow T \longrightarrow M \longrightarrow M/T \longrightarrow 0,
		\]
		with $T$ the torsion submodule of $M$, $w$-splits.
		\item For every $w$-finitely generated $R$-module $M$, the canonical exact sequence 
		\[
		0 \longrightarrow T \longrightarrow M \longrightarrow M/T \longrightarrow 0,
		\]
		with $T$ the torsion submodule of $M$, $w$-splits.
	\end{enumerate}
\end{theorem}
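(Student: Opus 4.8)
The plan is to transport the classical Pr\"ufer-domain theory (\cite{FS01}) into the $w$-world through the $w$-Nagata functor $(-)\{x\}=-\otimes_R R\{x\}$ and through localization at $w$-maximal ideals, using the two standard equivalences for a domain $R$ (see \cite{wk24}): $R$ is a P$v$MD iff $R_{\fkm}$ is a valuation domain for every $\fkm\in w\text{-}\Max(R)$, iff $R\{x\}$ is a Pr\"ufer domain; and the fact (Lemma~\ref{Nag}) that $(-)\{x\}$ is exact and carries $w$-finitely generated, $w$-flat, $w$-projective modules to finitely generated, flat, finitely generated projective $R\{x\}$-modules. First I record the soft reductions. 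A finitely generated module is $w$-finitely generated, so (4)$\Rightarrow$(3), (6)$\Rightarrow$(5), (8)$\Rightarrow$(7); by the preliminaries $w$-split $\Rightarrow$ $w$-projective (so (5)$\Rightarrow$(3), (6)$\Rightarrow$(4)) and $w$-split $\Rightarrow$ $w$-pure (Proposition~\ref{w-sp-w-pure}(1)); and (6)$\Rightarrow$(8), (5)$\Rightarrow$(7) are immediate, the canonical sequence being one of the sequences in question. Moreover a $w$-split module $C$ is $w$-projective: taking $0\to K\to P\to C\to 0$ with $P$ free, $w$-splitness lifts $d_k\Id_K$ through $P$ for the generators $d_1,\dots,d_n$ of some $J\in\GV(R)$, so $J\cdot\Ext^1_R(C,N)=0$ for all $N$. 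Conversely a $w$-finitely generated $w$-projective $C$ is a $w$-split module: $C\{x\}$ is finitely generated projective over $R\{x\}$ (Lemma~\ref{Nag}(3)), so every $0\to A\to B\to C\to 0$ splits after $-\otimes_R R\{x\}$, hence is $w$-split (the descent is discussed at the end); in particular (3)$\Rightarrow$(5) and (4)$\Rightarrow$(6). After these reductions it suffices to establish $(1)\Rightarrow(2)$, $(1)\Rightarrow(4)$, $(7)\Rightarrow(2)$, $(3)\Rightarrow(1)$ and $(2)\Rightarrow(1)$, which close up the full cycle of equivalences.

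\textbf{Forward direction.} Assume $R$ is a P$v$MD and let $M$ be $w$-finitely generated, $C:=M/T(M)$ (again $w$-finitely generated, and torsion-free). Localizing at $\fkm\in w\text{-}\Max(R)$: $T(M)_{\fkm}=T_{R_{\fkm}}(M_{\fkm})$ is a pure submodule of $M_{\fkm}$ over the valuation domain $R_{\fkm}$ (the torsion submodule of any module over a valuation domain is pure), so $T(M)\hookrightarrow M$ is locally pure, i.e.\ $w$-pure; this gives (2) for $w$-finitely generated (hence finitely generated) $M$, and for arbitrary $M$ on writing $T(M)=\varinjlim T(M_i)$ over finitely generated submodules and applying Proposition~\ref{w-sp-w-pure}(2). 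Also $C_{\fkm}$ is torsion-free over the valuation domain $R_{\fkm}$, hence flat, so $\Tor^R_1(N,C)_{\fkm}=\Tor^{R_{\fkm}}_1(N_{\fkm},C_{\fkm})=0$ for every $N$; thus $\Tor^R_1(N,C)$ is $\GV$-torsion, $C$ is $w$-flat, and $C$ is $w$-projective by Proposition~\ref{wf-wp}. This is (4) (and, via the reductions, (3), (5), (6), (8), (7)), settling the Geroldinger--Kim--Loper question \cite{ghl25}. Finally $(7)\Rightarrow(2)$ since (7) makes the canonical torsion sequence $w$-pure for finitely generated $M$ (Proposition~\ref{w-sp-w-pure}(1)), hence for all $M$ by Proposition~\ref{w-sp-w-pure}(2).

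\textbf{Converse.} For $(3)\Rightarrow(1)$: a nonzero finitely generated ideal $I\subseteq R$ is torsion-free, so $I=I/T(I)$ and (3) makes $I$ $w$-projective; being finitely generated it is $w$-finitely generated $w$-projective, so $I\{x\}$ is finitely generated projective, hence invertible, as an ideal of the domain $R\{x\}$, whence $I$ is $w$-invertible (\cite{wk24}). Since $I$ was arbitrary, $R$ is a P$v$MD. Alternatively, $(2)\Rightarrow(1)$: torsion submodules being $w$-pure in finitely generated modules localizes to torsion submodules being pure in every finitely generated $R_{\fkm}$-module, which forces $R_{\fkm}$ to be a valuation domain (\cite{FS01}), so $R$ is a P$v$MD.

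\textbf{Main obstacle.} The delicate step is the descent used above: passing from a splitting of $\zeta\{x\}$ over $R\{x\}$ (or from $\zeta_{\fkm}$ being split/pure over each $R_{\fkm}$) back to $\zeta$ being $w$-split (or $w$-pure) over $R$. One ingredient is harmless: a $\GV$-ideal $J=\langle a_1,\dots,a_m\rangle$ extends to the unit ideal of $R\{x\}$ because $a_1+a_2X+\cdots+a_mX^{m-1}\in S_w$, which lets one clear denominators and produce the maps $h_k\colon C\to M$ with $gh_k=d_k\Id_C$. The real difficulty is that $M/T(M)$ need not be finitely presented over $R$, so ``$\Hom$ and $\Ext$ commute with the flat base change $R\to R\{x\}$'' is not directly available. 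This is handled by working with the intrinsic characterization ``$\zeta$ is $w$-split iff its class in $\Ext^1_R$ is $\GV$-torsion'' together with the detection of $\GV$-torsion by vanishing under $(-)\{x\}$ (equivalently at all $w$-maximal ideals, Lemma~\ref{cog-tor}), which requires only $w$-finite generation of the cokernel; carrying this through carefully is the technical heart of the proof.
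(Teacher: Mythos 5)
Most of your reductions and your forward/converse implications are sound, and in several places you take a genuinely different (and arguably cleaner) route than the paper: your $(2)\Rightarrow(1)$ simply localizes $w$-purity at each $\fkm\in w\text{-}\Max(R)$ and invokes the classical Pr\"ufer criterion for $R_{\fkm}$, whereas the paper runs a contradiction argument through the pure-injective cogenerator $\widetilde E$; and your $(3)\Rightarrow(1)$ goes through $w$-invertibility of finitely generated ideals via the Nagata ring, whereas the paper writes a torsion-free module as a direct limit of finitely generated submodules and uses closure of $w$-flatness under direct limits. Those pieces are correct.

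The genuine gap is the bridge from the $w$-projectivity conditions to the $w$-split conditions, i.e.\ your claimed $(3)\Rightarrow(5)$ and $(4)\Rightarrow(6)$ (equivalently, that a $w$-finitely generated $w$-projective torsion-free module is $w$-split). This step is indispensable: without it your established implications give the cycle $(1)\Leftrightarrow(2)\Leftrightarrow(3)\Leftrightarrow(4)$ but provide no path from $(1)$ into $(5)$--$(8)$, since $(7)\Rightarrow(2)$ only exits that block. Your proposed argument --- split $\zeta\{x\}$ over $R\{x\}$ using Lemma~\ref{Nag}(3) and descend --- founders exactly where you say it does: $M/T$ is only $w$-finitely generated, not finitely presented, so neither $\Ext^1_R(C,A)\otimes_R R\{x\}\to\Ext^1_{R\{x\}}(C\{x\},A\{x\})$ nor $\Ext^1_R(C,A)_{\fkm}\to\Ext^1_{R_{\fkm}}(C_{\fkm},A_{\fkm})$ is known to be injective, so vanishing of $[\zeta]$ after base change does not detect that $[\zeta]$ is $\GV$-torsion. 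You explicitly defer "carrying this through carefully" as "the technical heart," which is an acknowledgement that the step is not proved. The paper closes this gap differently: for $(3)\Rightarrow(6)$ it picks a finitely generated $N\subseteq M/T$ with $N_w=(M/T)_w$, transfers $w$-projectivity from $N$ to $M/T$ via \cite[Proposition 6.7.2]{wk24}, and then invokes \cite[Exercise 6.38]{wk24} (together with \cite[Proposition 2.4]{wq20} for $(5)\Rightarrow(7)$, $(6)\Rightarrow(8)$) for the implication "$w$-finitely generated $w$-projective $\Rightarrow$ $w$-split." You need either that cited result or a complete descent argument; as written, the proposal does not close the loop.
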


\begin{proof}
	\noindent $(1)\Rightarrow(2)$:  
	Let $M$ be an $R$-module, and consider the canonical exact sequence
	\[
	0 \longrightarrow T \longrightarrow M \longrightarrow M/T \longrightarrow 0,
	\]
	where $T$ is the torsion submodule of $M$. Then $M/T$ is torsion-free.  
	By \cite[Theorem 3.5]{wq15}, $M/T$ is $w$-flat, hence $T$ is a $w$-pure submodule of $M$ by \cite[Theorem 2.6]{xw18}.
	
	\smallskip
	$(2)\Rightarrow(1)$:  
	Assume, to the contrary, that $R$ is not a P$v$MD. Then by \cite[Theorem 3.5]{wq15}, we have ${w}$-$\mathrm{w.gl.dim}(R)\geq 2$.  
	Hence there exists a finitely generated ideal $L$ of $R$ which is not $w$-flat by \cite[Proposition 3.3]{wq15}.  
	Thus, there is a finitely generated ideal $I$ of $R$ such that $\Tor^R_1(L,R/I)$ is not \GV-torsion by \cite[Theorem 6.4.4]{wk24}.  
	By Lemma~\ref{cog-tor}, we have $\Hom_R(\Tor^R_1(L,R/I),\tilde{E})\neq 0$,  
	where $\tilde{E}=\Hom_R(R\{x\},E')$ and $E'=\prod_{\m\in w\text{-}\Max(R)}E_R(R\{x\}/\m\{x\})$.  
	Equivalently, $\Ext^1_R(L,\Hom_R(R/I,\tilde{E}))\neq 0$, giving a non-split exact sequence:
	\[
	0 \longrightarrow \Hom_R(R/I,\tilde{E}) \longrightarrow M \longrightarrow L \longrightarrow 0. \tag{a}
	\]
	Since $\Hom_R(R/I,\tilde{E})$ is torsion and $L$ is torsion-free, the assumption $(2)$ implies that $\Hom_R(R/I,\tilde{E})$ is a $w$-pure submodule of $M$.  
	For any finitely presented $R$-module $N$, the induced sequence
	\[
	0 \longrightarrow \Hom_R(N,\Hom_R(R/I,\tilde{E})) \longrightarrow \Hom_R(N,M) \longrightarrow \Hom_R(N,L) \longrightarrow 0 \tag{b}
	\]
	is $w$-exact. Hence $K := \ker(\Hom_R(N,\Hom_R(R/I,\tilde{E})) \to \Hom_R(N,M))$ is \GV-torsion.  
	But $\Hom_R(N,\Hom_R(R/I,\tilde{E}))\cong \Hom_R(N\otimes_RR/I,\tilde{E})$ is \GV-torsion-free since $\tilde{E}$ is \GV-torsion-free.  
	Thus, $K=0$ and sequence (b) is exact, implying that sequence (a) is pure.  
	Since $\tilde{E}$ is pure-injective, so is $\Hom_R(R/I,\tilde{E})$ \cite[Chapter XIII, Lemma 2.1(i)]{FS01},  
	forcing sequence (a) to split, a contradiction.  
	Therefore, $R$ must be a P$v$MD.
	
	\smallskip
	$(1)\Rightarrow(4)$:  
	Let $M$ be a $w$-finitely generated $R$-module, and consider the canonical exact sequence
	\[
	0 \longrightarrow T \longrightarrow M \longrightarrow M/T \longrightarrow 0,
	\]
	where $T$ is the torsion submodule of $M$. Since $M/T$ is torsion-free and $w$-finitely generated,  
	it is $w$-flat by \cite[Theorem 3.5]{wq15}, and hence $w$-projective by Proposition~\ref{wf-wp}.
	
	\smallskip
	$(4)\Rightarrow(3)$, $(6)\Rightarrow(5)$, and $(8)\Rightarrow(7)$: Trivial.
	
	\smallskip
	$(5)\Rightarrow(3)$ and $(6)\Rightarrow(4)$:  
	Since every $w$-split module is $w$-projective \cite[Corollary 2.5]{wq20}, these implications follow.
	
	\smallskip
	$(5)\Rightarrow(7)$ and $(6)\Rightarrow(8)$:  
	Follow from \cite[Proposition 2.4]{wq20}.
	
	\smallskip
	$(3)\Rightarrow(6)$:  
	Let $M$ be a $w$-finitely generated $R$-module. Since $M/T$ is torsion-free and \GV-torsion-free,  
	there exists a finitely generated submodule $N\subseteq M/T$ with $N_w=(M/T)_w$.  
	By assumption, $N$ is $w$-projective, and hence so is $M/T$ \cite[Proposition 6.7.2]{wk24}.  
	By \cite[Exercise 6.38]{wk24}, $M/T$ is $w$-split.
	
	\smallskip
	$(3)\Rightarrow(1)$:  
	Let $M$ be a torsion-free $R$-module. Then $M=\varinjlim_{i\in\Gamma}M_i$, where $\{M_i\}$ is the directed system of finitely generated submodules of $M$ \cite[Lemmas 2.3, 2.5]{GT12}.  
	By assumption, each $M_i$ is $w$-projective, hence $w$-flat \cite[Theorem 6.7.25]{wk24}.  
	Since the class of $w$-flat modules is closed under direct limits, $M$ is $w$-flat.  
	Thus, $R$ is a P$v$MD by \cite[Theorem 3.5]{wq15}.
	
	\smallskip
	$(7)\Rightarrow(2)$:  
	Let $M$ be an $R$-module with torsion submodule $T$.  
	Since $M=\varinjlim_{i\in\Gamma}M_i$ as above, set $T_i=M_i\cap T$. Then $T_i$ is the torsion submodule of $M_i$ and $T=\varinjlim_{i\in\Gamma}T_i$.  
	For each $i$, the inclusion $e_{T_i}:T_i\hookrightarrow M_i$ is $w$-split by assumption, hence $w$-pure by Proposition~\ref{w-sp-w-pure}(1).  
	By Proposition~\ref{w-sp-w-pure}(2), $T=\varinjlim T_i$ is a $w$-pure submodule of $M$.
\end{proof}

Recall that an integral domain $R$ is called a \emph{Krull domain} if every nonzero ideal of $R$ is $w$-invertible.  
Thus, Krull domains can be regarded as $w$-analogues of Dedekind domains.

In \cite{ghl25}, the authors proposed the following open problem:

\medskip
\begin{quote}
	\noindent\textbf{Problem 7.}  
	\emph{Prove or disprove the assertion that every result in classical commutative algebra has a corresponding analogue in $w$-module theory.}
\end{quote}

\medskip
As a motivation, they raised the following question:  
Provide a $w$-version of the classical result that, in a Dedekind domain, every finitely generated module $M$ decomposes as a direct sum of a projective module and a torsion module.  
Equivalently, the quotient $M/T$ of a finitely generated $R$-module $M$ by its torsion submodule $T$ is projective,  
or equivalently, the canonical exact sequence
\[
0 \longrightarrow T \longrightarrow M \longrightarrow M/T \longrightarrow 0
\]
splits for every finitely generated $R$-module $M$.

Since every Krull domain is a P$v$MD, Theorem~\ref{main} immediately yields the following result.

\begin{corollary}
	Let $R$ be a Krull domain. Then the following statements hold:
	\begin{enumerate}
		\item The torsion submodule of any $($finitely generated$)$ $R$-module is a $w$-pure submodule.
		\item The quotient of any $(w$-$)$finitely generated $R$-module by its torsion submodule is $w$-projective $($equivalently, $w$-split$)$.
		\item For every $(w$-$)$finitely generated $R$-module $M$, the canonical exact sequence
		\[
		0 \longrightarrow T \longrightarrow M \longrightarrow M/T \longrightarrow 0,
		\]
		where $T$ is the torsion submodule of $M$, $w$-splits.
	\end{enumerate}
\end{corollary}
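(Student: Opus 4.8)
The plan is to obtain the corollary as a direct specialization of Theorem~\ref{main}, so the only point that needs a word is that a Krull domain is a P$v$MD. Recall that $R$ is Krull exactly when every nonzero ideal of $R$ is $w$-invertible; in particular every nonzero \emph{finitely generated} ideal is $w$-invertible, which is precisely the ideal-theoretic characterization of P$v$MDs recorded from \cite[Theorem 7.5.9]{wk24}. Hence $R$ is a P$v$MD, and condition (1) of Theorem~\ref{main} holds.

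With that in place, each item of the corollary is an instance of an equivalence already proved. Item (1) is the equivalence $(1)\Leftrightarrow(2)$ of Theorem~\ref{main}: the torsion submodule of any $R$-module — a fortiori of any finitely generated one — is $w$-pure. Item (3) is the equivalence $(1)\Leftrightarrow(7)\Leftrightarrow(8)$. For item (2), $w$-projectivity of the quotient is $(1)\Leftrightarrow(3)\Leftrightarrow(4)$ and $w$-splitness of the quotient is $(1)\Leftrightarrow(5)\Leftrightarrow(6)$; the parenthetical claim that the two coincide for a quotient by the torsion submodule is exactly the content of the implications $(5)\Rightarrow(3)$ and $(3)\Rightarrow(6)$ used inside the proof of Theorem~\ref{main} (together with the general fact $w$-split $\Rightarrow$ $w$-projective). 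So nothing new is required.

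There is no real obstacle here: the corollary is a pure specialization, and the write-up will be only a few lines — record that Krull $\Rightarrow$ P$v$MD, then invoke the relevant equivalences of Theorem~\ref{main} for (1), (2), (3), flagging the equivalence of $w$-projective and $w$-split for the modules in question. If one prefers a self-contained argument for (1), one can instead note directly that a torsion-free module over a Krull domain is $w$-flat by \cite[Theorem 3.5]{wq15}, whence the inclusion $T(M)\hookrightarrow M$ is $w$-pure by \cite[Theorem 2.6]{xw18}; but citing Theorem~\ref{main} is cleaner and makes the parallel with the classical Dedekind/Pr\"ufer statements transparent.
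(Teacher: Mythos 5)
Your proposal is correct and matches the paper's own argument: the paper likewise derives the corollary immediately from Theorem~\ref{main} after observing that every Krull domain (every nonzero ideal $w$-invertible) is in particular a P$v$MD (every nonzero finitely generated ideal $w$-invertible). Your extra remarks on which numbered equivalences of Theorem~\ref{main} yield each item, and on the coincidence of $w$-projective and $w$-split for these quotients, are accurate but not needed beyond what the theorem already provides.
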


\section{$w$-Hattori-Davis Criteria for P$v$MDs}

Having established the $w$-module theoretic properties of Krull domains, 
we now turn to a different direction and investigate unified characterizations 
of P$v$MDs. 

\begin{lemma} \textup{(\cite[Theorem 2]{H57} and \cite[Theorem]{D69})}\label{thm:Prufer-Hattori-Davis}
Let $R$ be an integral domain with quotient field $K$. The following statements are equivalent:
\begin{enumerate}
    \item $R$ is a Pr\"ufer domain.
    \item $\Tor^R_2(M,N)=0$ for every pair of $R$-modules $M$ and $N$.
    \item For every $R$-module $X$ and every torsion-free $R$-module $A$, $\Tor^R_1(X,A)=0$.
    \item For torsion-free $R$-modules $A$ and $B$, the tensor product $A \otimes_R B$ is torsion-free.
    \item For every pair of $K$-algebras $\mathcal T$ and $\mathcal S$, and every pair of
    $R$-subalgebras $A \subseteq \mathcal T$ and $B \subseteq \mathcal S$, the canonical map
    \[
        \mu_{A,B}\colon A\otimes_R B \longrightarrow \mathcal T\otimes_K \mathcal S
    \]
    is a monomorphism.
\end{enumerate}
\end{lemma}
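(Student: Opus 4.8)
The statement is classical (Hattori for $(1)$–$(4)$, Davis for $(5)$), so the plan is to assemble it from standard homological facts about the domain $R$ and its quotient field $K$: I would prove the cycle $(1)\Rightarrow(2)\Rightarrow(3)\Rightarrow(1)$ and then $(3)\Leftrightarrow(4)\Leftrightarrow(5)$. Two elementary observations are used throughout: over a domain, every submodule of a free (more generally, torsion-free) module is torsion-free; and every $K$-vector space is simultaneously flat and torsion-free as an $R$-module.

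For $(1)\Leftrightarrow(2)$: a domain is Pr\"ufer iff every finitely generated ideal is invertible, hence projective, hence flat, so $R$ is Pr\"ufer iff every ideal is flat iff $\cwd(R)\le 1$, and $\cwd(R)\le 1$ is equivalent by dimension shifting to $\Tor^R_2(M,N)=0$ for all $M,N$. For $(2)\Rightarrow(3)$: $\cwd(R)\le 1$ means submodules of flat modules are flat, and a torsion-free $A$ embeds in the flat module $A\otimes_R K$, so $A$ is flat and $\Tor^R_1(X,A)=0$ for all $X$. For $(3)\Rightarrow(1)$: given any $X$, take $0\to Y\to F\to X\to 0$ with $F$ free; then $Y\subseteq F$ is torsion-free, so $\Tor^R_1(-,Y)=0$ by $(3)$, i.e. $Y$ is flat and $\fd_R(X)\le 1$; hence $\cwd(R)\le 1$ and $R$ is Pr\"ufer.

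For $(3)\Rightarrow(4)$: given torsion-free $A,B$, tensor $0\to A\to A\otimes_R K\to C\to 0$ with $B$; since $A\otimes_R K$ is $R$-flat, the kernel of $A\otimes_R B\to (A\otimes_R K)\otimes_R B$ is a quotient of $\Tor^R_1(C,B)$, which vanishes by $(3)$ (as $B$ is torsion-free), so $A\otimes_R B$ embeds in $(A\otimes_R K)\otimes_R B\cong (A\otimes_R K)\otimes_K (K\otimes_R B)$, a $K$-vector space, and is therefore torsion-free. For $(4)\Rightarrow(3)$: with $0\to Y\to F\to X\to 0$ as above, $\Tor^R_1(X,A)$ embeds in $Y\otimes_R A$, which is torsion-free by $(4)$; but $\Tor^R_1(X,A)\otimes_R K\cong \Tor^R_1(X,A\otimes_R K)=0$ because $A\otimes_R K$ is $R$-flat, so $\Tor^R_1(X,A)$ is also torsion, hence $0$.

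Finally $(4)\Leftrightarrow(5)$. For $(4)\Rightarrow(5)$: with $A\subseteq\mathcal T$ and $B\subseteq\mathcal S$, the modules $A,B$ are torsion-free, so $A\otimes_R B$ embeds in $(A\otimes_R B)\otimes_R K\cong (A\otimes_R K)\otimes_K (B\otimes_R K)$; localizing the inclusions $A\hookrightarrow\mathcal T$ and $B\hookrightarrow\mathcal S$ at $R\setminus\{0\}$ identifies $A\otimes_R K$ and $B\otimes_R K$ with the $K$-subspaces $KA\subseteq\mathcal T$ and $KB\subseteq\mathcal S$, which are direct summands, so $KA\otimes_K KB\hookrightarrow\mathcal T\otimes_K\mathcal S$; tracing $a\otimes b$ through these identifications shows the composite is exactly $\mu_{A,B}$, hence a monomorphism. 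For $(5)\Rightarrow(4)$ one must manufacture algebras: given torsion-free $A,B$, take the trivial extensions $K\oplus (A\otimes_R K)$ and $K\oplus (B\otimes_R K)$ (second summand a square-zero ideal), which are $K$-algebras containing the $R$-subalgebras $R\oplus A$ and $R\oplus B$; the module decomposition $(R\oplus A)\otimes_R(R\oplus B)=R\oplus A\oplus B\oplus (A\otimes_R B)$ is respected by $\mu_{R\oplus A,\,R\oplus B}$, so its injectivity forces $A\otimes_R B\to (A\otimes_R K)\otimes_K (B\otimes_R K)$ to be injective, and the target is a $K$-vector space, giving $(4)$. I expect the main obstacle to be this last implication, i.e. closing the loop from $(5)$: one must convert the purely formal injectivity of $\mu_{A,B}$ on subalgebras into a statement about arbitrary torsion-free modules, and the trivial-extension trick is the device that does it; a secondary point needing care is $(4)\Rightarrow(5)$, where one must check that the composite of the natural localization and summand inclusions really is the canonical map $\mu_{A,B}$ and not merely some monomorphism.
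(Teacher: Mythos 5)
The paper gives no proof of this lemma at all: it is stated purely as a citation of Hattori \cite{H57} and Davis \cite{D69}, so there is no in-paper argument to compare yours against. Your reconstruction is correct and complete, and it follows the classical line of those two sources: the cycle $(1)\Rightarrow(2)\Rightarrow(3)\Rightarrow(1)$ via the characterization of Pr\"ufer domains by $\cwd(R)\le 1$, the torsion-free/torsion dichotomy argument for $(4)\Leftrightarrow(3)$ (a submodule of the torsion-free $Y\otimes_R A$ that becomes zero after tensoring with $K$ must vanish), and the square-zero trivial-extension device for $(5)\Rightarrow(4)$, which is precisely Davis's original trick. The only inputs you use that deserve explicit flagging are: (a) a finitely generated flat ideal of a domain is projective (so that ``every ideal flat'' really does recover invertibility of finitely generated ideals) --- this is classical and is even among the paper's references \cite{PR04}; and (b) in $(4)\Rightarrow(5)$, the identification of $A\otimes_R K$ with the $K$-subspace $KA\subseteq\mathcal T$ uses exactness of localization applied to $A\hookrightarrow\mathcal T$ together with $\mathcal T\otimes_R K\cong\mathcal T$, which you correctly invoke. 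I see no gap.
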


Building on these classical results, we can extend them to the $w$-module context to obtain unified characterizations of P$v$MDs.

\begin{theorem}\label{thm:P$v$MD-Hattori-Davis}
    Let $R$ be an integral domain with quotient field $K$. The following are equivalent:
    \begin{enumerate}
        \item $R$ is a P$v$MD.
        \item $\Tor^R_2(M,N)$ is \GV-torsion for every pair of $R$-modules $M,N$
        \textup{(equivalently, the $w$-weak global dimension of $R$ is $\le 1$)}.
        \item For every $R$-module $X$ and every torsion-free $R$-module $A$, $\Tor^R_1(X,A)$ is \GV-torsion.
        \item For every pair of $K$-algebras $\mathcal T,\mathcal S$ and every pair of
        $R$-subalgebras $A\subseteq\mathcal T$, $B\subseteq\mathcal S$, the canonical map
        \[
        \mu_{A,B}\colon A\otimes_R B \longrightarrow \mathcal T\otimes_K \mathcal S
        \]
        has \GV-torsion kernel \textup{(equivalently, $(\mu_{A,B})_{\fkm}$ is injective for all
            $\fkm\in w\text{-}\Max(R)$)}.
    \end{enumerate}
\end{theorem}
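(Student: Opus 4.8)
The plan is to prove Theorem~\ref{thm:P$v$MD-Hattori-Davis} by reducing each implication to its classical counterpart (Lemma~\ref{thm:Prufer-Hattori-Davis}) via localization at $w$-maximal ideals, using the fact that a module is $\GV$-torsion if and only if it vanishes after localizing at every $\fkm\in w\text{-}\Max(R)$. The backbone is the characterization of P$v$MDs as precisely those domains $R$ for which $R_\fkm$ is a valuation domain for every $\fkm\in w\text{-}\Max(R)$; this is exactly the statement, used in the proof of Theorem~\ref{main}, that $R$ is a P$v$MD iff $w$-$\mathrm{w.gl.dim}(R)\le 1$, and it ties the $w$-local picture to the classical Prüfer theory of Lemma~\ref{thm:Prufer-Hattori-Davis}.

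\textbf{Implications $(1)\Leftrightarrow(2)$.} The equivalence $(1)\Leftrightarrow(2)$ is essentially quoted: $R$ is a P$v$MD iff $w$-$\mathrm{w.gl.dim}(R)\le 1$ by \cite[Theorem 3.5]{wq15}, and the latter is visibly equivalent to $\Tor^R_2(M,N)$ being $\GV$-torsion for all $M,N$. For a self-contained argument I would note that $\Tor^R_2(M,N)$ is $\GV$-torsion for all $M,N$ iff $(\Tor^R_2(M,N))_\fkm=\Tor^{R_\fkm}_2(M_\fkm,N_\fkm)=0$ for all $\fkm\in w\text{-}\Max(R)$ and all $M,N$, which by Lemma~\ref{thm:Prufer-Hattori-Davis}$(1)\Leftrightarrow(2)$ says each $R_\fkm$ is Prüfer, i.e.\ a valuation domain (being local), which is the P$v$MD condition.

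\textbf{Implications involving $(3)$ and $(4)$.} For $(2)\Leftrightarrow(3)$ and $(1)\Leftrightarrow(4)$ I would run the same localization reduction. For $(3)$: $\Tor^R_1(X,A)$ is $\GV$-torsion for all $X$ and all torsion-free $A$ iff $\Tor^{R_\fkm}_1(X_\fkm,A_\fkm)=0$ for all such $X,A$ and all $\fkm$; since localization of a torsion-free $R$-module is a torsion-free $R_\fkm$-module, and conversely every torsion-free $R_\fkm$-module arises this way up to the relevant Tor computation, this matches Lemma~\ref{thm:Prufer-Hattori-Davis}$(3)$ locally — here one should be slightly careful and either cite the standard dimension-shifting argument that $(2)\Leftrightarrow(3)$ already holds over any commutative ring, or observe directly that $A$ torsion-free over $R$ implies $A$ is $w$-flat by \cite[Theorem 3.5]{wq15} exactly when $R$ is a P$v$MD, giving the cleaner route $(1)\Rightarrow(3)$, with $(3)\Rightarrow(2)$ by dimension shifting on a free presentation of $M$ whose syzygies land in a torsion-free module. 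For $(4)$: the map $\mu_{A,B}$ localizes, $(\mu_{A,B})_\fkm=\mu_{A_\fkm,B_\fkm}$ with $A_\fkm\subseteq\mathcal T$ and $B_\fkm\subseteq\mathcal S$ still $R_\fkm$-subalgebras of $K$-algebras, so $\ker\mu_{A,B}$ is $\GV$-torsion iff $\mu_{A_\fkm,B_\fkm}$ is injective for every $\fkm$, which by Lemma~\ref{thm:Prufer-Hattori-Davis}$(5)$ holds for all $A,B,\mathcal T,\mathcal S$ iff each $R_\fkm$ is a valuation domain, i.e.\ $R$ is a P$v$MD. The parenthetical ``equivalently, $(\mu_{A,B})_\fkm$ is injective for all $\fkm$'' is just the restatement of ``$\GV$-torsion kernel''.

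\textbf{Main obstacle.} The main technical point to get right is the compatibility of all the relevant functors with localization at $w$-maximal ideals — that $\Tor$ commutes with localization (standard), that localization preserves torsion-freeness over a domain (standard), and most importantly that the canonical map $\mu_{A,B}$ commutes with localization so that $(\mu_{A,B})_\fkm=\mu_{A_\fkm,B_\fkm}$ where the target $\mathcal T\otimes_K\mathcal S$ is unchanged (since it is already a $K$-vector space and $R_\fkm\subseteq K$). The second delicate point is the direction $(3)\Rightarrow(1)$ or $(2)$: one cannot simply localize the class of ``torsion-free $A$'' and expect to recover \emph{all} torsion-free $R_\fkm$-modules, so I would instead argue $(3)\Rightarrow(1)$ by taking $A$ to range over finitely generated ideals $L$ of $R$ — these are torsion-free — deducing $\Tor^R_1(X,L)$ is $\GV$-torsion for all $X$, hence $L$ is $w$-flat, hence $R$ is a P$v$MD by \cite[Proposition 3.3]{wq15} and \cite[Theorem 3.5]{wq15}, mirroring the argument used for $(2)\Rightarrow(1)$ in Theorem~\ref{main}.
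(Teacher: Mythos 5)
Your proposal is correct and follows essentially the same strategy as the paper: $(1)\Rightarrow(3)$ via $w$-flatness of torsion-free modules over a P$v$MD, $(3)\Rightarrow(2)$ by dimension shifting on a torsion-free syzygy, and both directions of $(1)\Leftrightarrow(4)$ by localizing $\mu_{A,B}$ at each $\fkm\in w\text{-}\Max(R)$ and invoking Davis' theorem for the valuation domains $R_{\fkm}$. The one place you genuinely diverge is in closing the cycle back to $(1)$: the paper argues $(2)\Rightarrow(1)$ by a dimension shift that, as written ($\Tor^R_1(X,A)\cong\Tor^R_2(K,A)$ with $K$ a syzygy of $X$), runs the isomorphism in the wrong direction --- the short exact sequence $0\to K\to F\to X\to 0$ gives $\Tor^R_2(X,A)\cong\Tor^R_1(K,A)$, and the intended shift should instead embed the torsion-free module $A$ into its divisible hull $D$ and use $\Tor^R_1(X,A)\cong\Tor^R_2(X,D/A)$. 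You avoid this entirely, either by quoting $(1)\Leftrightarrow(2)$ from \cite{wq15} or by specializing $(3)$ to finitely generated ideals $L$ and citing \cite[Proposition 3.3, Theorem 3.5]{wq15} as in Theorem~\ref{main}; both of your routes are sound and arguably cleaner. One small point worth a sentence in your $(4)\Rightarrow(1)$ reduction: to apply Lemma~\ref{thm:Prufer-Hattori-Davis}(5) to $R_{\fkm}$ you need injectivity for \emph{every} $R_{\fkm}$-subalgebra, not merely for localizations of $R$-subalgebras; this is fine because any $R_{\fkm}$-subalgebra $A'$ is in particular an $R$-subalgebra with $(A')_{\fkm}=A'$, but the observation should be made explicit (the paper sidesteps it by taking $\mathcal T=\mathcal S=K$ and using the torsion-freeness form of Davis' criterion instead).
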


\begin{proof}
	$(1)\Rightarrow(3)$:  
	If $R$ is a P$v$MD, every torsion-free $R$-module $A$ is $w$-flat (\cite[Theorem 7.5.9]{wk24}).  
	By the homological criterion, $A$ is $w$-flat iff $\Tor^R_1(X,A)$ is \GV-torsion for all $X$, hence (3).
	
	\smallskip
	$(3)\Rightarrow(2)$:  
	For any $X,Y$, choose a free precover $F\twoheadrightarrow Y$ with kernel $K$.  
	Since $R$ is a domain, $K\subseteq F$ is torsion-free. By dimension shifting:
	\[
	\Tor^R_2(X,Y)\;\cong\;\Tor^R_1(X,K).
	\]
	By (3), the right-hand side is \GV-torsion, so (2) holds.
	
	\smallskip
	$(2)\Rightarrow(1)$:  
	Assume $\Tor^R_2(-,-)$ is \GV-torsion for all pairs. Let $A$ be torsion-free and $X$ arbitrary.  
	Choose a free precover $F\twoheadrightarrow X$ with kernel $K$. Then:
	\[
	\Tor^R_1(X,A)\;\cong\;\Tor^R_2(K,A),
	\]
	which is \GV-torsion by (2). Hence $A$ is $w$-flat, and thus $R$ is a P$v$MD since $w$-w.gl.dim$(R)\le1$.
	
	\smallskip
	$(1)\Rightarrow(4)$:  
	For any $\fkm\in w\text{-}\Max(R)$, $R_{\fkm}$ is a valuation domain.  
	By Davis’ theorem, the localized map
	\[
	(\mu_{A,B})_{\fkm}\colon A_{\fkm}\otimes_{R_{\fkm}}B_{\fkm} \longrightarrow \mathcal (T\otimes_K\mathcal S)_{\fkm}
	\]
	is injective. Since this holds for all $\fkm$, $\ker(\mu_{A,B})$ is \GV-torsion, so $\mu_{A,B}$ is $w$-injective.
	
	\smallskip
	$(4)\Rightarrow(1)$:  
	Take $\mathcal T=\mathcal S=K$ and $A,B\subseteq K$. By (4), $\mu_{A,B}\colon A\otimes_R B\to K$ has \GV-torsion kernel.  
	Localizing at any $\fkm\in w\text{-}\Max(R)$ gives:
	\[
	A_{\fkm}\otimes_{R_{\fkm}}B_{\fkm} \hookrightarrow K,
	\]
	so $A_{\fkm}\otimes_{R_{\fkm}}B_{\fkm}$ is torsion-free over $R_{\fkm}$.  
	By Davis’ theorem, $R_{\fkm}$ is Pr\"ufer for all $\fkm$, hence $R$ is a P$v$MD.
\end{proof}

\begin{corollary}\label{cor:P$v$MD-Davis-special-corrected}
	Let $R$ be an integral domain with quotient field $K$. The following are equivalent to $R$ being a P$v$MD:
	\begin{enumerate}
		\item For all $R$-subalgebras $A,B\subseteq K$, the map
		\[
		\mu_{A,B}\colon A\otimes_R B \longrightarrow K
		\]
		has \GV-torsion kernel \textup{(equivalently, $(\mu_{A,B})_{\fkm}$ is injective for all $\fkm\in w\text{-}\Max(R)$)}.
		\item For all torsion-free $R$-modules $M,N$ and all $\fkm\in w\text{-}\Max(R)$,
		the localization
		\[
		(M\otimes_R N)_{\fkm} \cong M_{\fkm}\otimes_{R_{\fkm}}N_{\fkm}
		\]
		is torsion-free over the valuation domain $R_{\fkm}$.
	\end{enumerate}
\end{corollary}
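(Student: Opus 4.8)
The plan is to deduce the corollary directly from Theorem~\ref{thm:P$v$MD-Hattori-Davis} together with the classical Hattori--Davis Lemma~\ref{thm:Prufer-Hattori-Davis}, so that only routine bookkeeping with localizations remains. For statement~(1), I would first observe that it is precisely the instance $\mathcal T=\mathcal S=K$ of condition~(4) of Theorem~\ref{thm:P$v$MD-Hattori-Davis}: indeed $K\otimes_K K\cong K$, and an $R$-subalgebra of $K$ is the same thing as an $R$-subalgebra of the $K$-algebra $K$. Hence every P$v$MD satisfies~(1). Conversely, inspecting the proof of $(4)\Rightarrow(1)$ in that theorem shows that it used only this very instance $\mathcal T=\mathcal S=K$ with $A,B\subseteq K$, so~(1) already forces $R$ to be a P$v$MD. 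The parenthetical reformulation in~(1) is just the standard dictionary that an $R$-module is $\GV$-torsion if and only if its localization at every $\fkm\in w\text{-}\Max(R)$ vanishes.

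Next I would prove that if $R$ is a P$v$MD then~(2) holds. Fix $\fkm\in w\text{-}\Max(R)$, so that $R_\fkm$ is a valuation domain, in particular Pr\"ufer. If $M,N$ are torsion-free $R$-modules, then exactness of localization together with the embeddings $M\hookrightarrow M\otimes_R K$ and $N\hookrightarrow N\otimes_R K$ show that $M_\fkm$ and $N_\fkm$ are torsion-free $R_\fkm$-modules, and one has the canonical isomorphism $(M\otimes_R N)_\fkm\cong M_\fkm\otimes_{R_\fkm}N_\fkm$. Applying condition~(4) of Lemma~\ref{thm:Prufer-Hattori-Davis} over the Pr\"ufer domain $R_\fkm$ gives that $M_\fkm\otimes_{R_\fkm}N_\fkm$ is torsion-free over $R_\fkm$, which is exactly~(2).

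For the reverse implication I would reduce~(2) to~(1), which was shown above to characterize P$v$MDs. Let $A,B\subseteq K$ be $R$-subalgebras; they are torsion-free $R$-modules, so by~(2) the module $(A\otimes_R B)_\fkm\cong A_\fkm\otimes_{R_\fkm}B_\fkm$ is torsion-free over $R_\fkm$ for every $\fkm\in w\text{-}\Max(R)$. On the other hand, applying $-\otimes_R K$ turns $\mu_{A,B}\colon A\otimes_R B\to K$ into an isomorphism, since $A\otimes_R K\cong B\otimes_R K\cong K$; hence $\ker\mu_{A,B}$ is a torsion $R$-module. By exactness of localization, $(\ker\mu_{A,B})_\fkm=\ker\!\big((\mu_{A,B})_\fkm\big)$ is a torsion submodule of the torsion-free module $(A\otimes_R B)_\fkm$, hence zero. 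Since this holds for all $\fkm$, the kernel $\ker\mu_{A,B}$ is $\GV$-torsion, so~(1) holds and therefore $R$ is a P$v$MD.

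There is no genuine difficulty here; the points that need a moment's care are the verification that $A_\fkm$ and $B_\fkm$ are $R_\fkm$-subalgebras of $K$ (so the classical Hattori--Davis machinery applies over $R_\fkm$ verbatim), the base-change isomorphism $(M\otimes_R N)_\fkm\cong M_\fkm\otimes_{R_\fkm}N_\fkm$, and the $\GV$-torsion/localization dictionary, all of which are standard and already recorded in the preliminaries. If anything, the one subtle observation is that the proof of $(4)\Rightarrow(1)$ in Theorem~\ref{thm:P$v$MD-Hattori-Davis} never uses algebras other than $K$, which is exactly what makes statement~(1) an honest equivalent of the P$v$MD property rather than a formally weaker one.
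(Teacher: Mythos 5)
Your proposal is correct and follows essentially the same route as the paper: statement (1) is obtained as the special case $\mathcal T=\mathcal S=K$ of Theorem~\ref{thm:P$v$MD-Hattori-Davis}(4), and statement (2) is handled by localizing at each $\fkm\in w\text{-}\Max(R)$ and invoking the Hattori--Davis equivalences of Lemma~\ref{thm:Prufer-Hattori-Davis} over the valuation (resp.\ Pr\"ufer) domains $R_{\fkm}$. The only difference is cosmetic: for $(2)\Rightarrow$ P$v$MD you route explicitly through (1) by noting that $\ker\mu_{A,B}$ is torsion and hence vanishes after localization, whereas the paper cites Davis' theorem over $R_{\fkm}$ directly; your version just makes explicit a step the paper leaves implicit.
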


\begin{proof}
	$(1)$ This follows from Theorem~\ref{thm:P$v$MD-Hattori-Davis} by taking $\mathcal T=\mathcal S=K$.
	
	$(2)$ 
	If $R$ is a P$v$MD, each $R_{\fkm}$ is a valuation domain, so by Hattori’s theorem,
	$M_{\fkm}\otimes_{R_{\fkm}}N_{\fkm}$ is torsion-free for all torsion-free $M,N$. 
 
	Conversely, if $(2)$ holds, let $M=A$ and $N=B$ be any $R$-subalgebras of $K$.  
	Then $A_{\fkm}\otimes_{R_{\fkm}}B_{\fkm}$ is torsion-free for all $\fkm$, so by Davis’ theorem each $R_{\fkm}$ is Pr\"ufer.  
	Thus, $R$ is a P$v$MD.
\end{proof}

In Hattori's original theorem \cite{H57}, the first four conditions listed in Lemma \ref{thm:Prufer-Hattori-Davis} are mutually equivalent, providing a characterization of Pr\"ufer domains. However, the following example shows that in the $w$-setting, the $w$-analogue of the fourth condition is not equivalent to the definition of a P$v$MD; that is, the tensor product of two ($\GV$-)torsion-free modules may fail to be $\GV$-torsion-free over P$v$MDs.

\begin{example}
	Let $R=k[x,y]$, where $k$ is a field, and put $M=N=\langle x,y\rangle_R$ (the ideal generated by $x$ and $y$).
	Then $R$ is a P$v$MD and $M,N$ are torsion-free.
	However, $M\otimes_R N$ is \emph{not} \GV-torsion-free.
	
	Indeed, consider the element
	\[
	z := x\otimes y - y\otimes x \ \in\ M\otimes_R N .
	\]
	We compute, using the identity $r\cdot(m\otimes n) = (rm)\otimes n = m\otimes(rn)$:
	\begin{align*}
		x\cdot z
		&= x(x\otimes y - y\otimes x)
		= x^2\otimes y - xy\otimes x
		= x\otimes xy - x\otimes yx
		= x\otimes(xy - yx) = 0, \\[2mm]
		y\cdot z
		&= y(x\otimes y - y\otimes x)
		= yx\otimes y - y^2\otimes x
		= y\otimes xy - y\otimes yx
		= y\otimes(xy - yx) = 0.
	\end{align*}
	Hence, $J\cdot z=0$ where $J=\langle x,y\rangle$.
	Since $J\in \GV(R)$ and $z\neq 0$, it follows that $M\otimes_R N$ has a nonzero \GV-torsion element.
	Thus, $M\otimes_R N$ is \emph{not} \GV-torsion-free.
\end{example}

\section{$w$-Flat Epimorphism Criteria for Overrings of P$v$MDs}

In order to further investigate structural properties of P$v$MDs, 
we now recall the notions of ring epimorphisms and their $w$-analogues, 
which will be essential for the subsequent characterization results.

\medskip

A ring homomorphism $\varphi:R\to S$ is called a \emph{$($categorical$)$ epimorphism of rings}
if for every pair of ring maps $\alpha,\beta:S\to T$, the equality
$\alpha\circ\varphi=\beta\circ\varphi$ implies $\alpha=\beta$.
Equivalently, $\varphi$ is a ring epimorphism iff the
multiplication map
\[
\mu:\ S\otimes_R S \longrightarrow S,\qquad s\otimes t\mapsto st,
\]
is an isomorphism of rings (\cite[Theorem 1.2.19]{G89}).

\medskip

A ring homomorphism $\varphi:R\to S$ is a \emph{flat epimorphism} if:
\begin{enumerate}
	\item $S$ is a flat $R$-module; and
	\item $\varphi$ is a ring epimorphism.
\end{enumerate}

\begin{definition}\label{def:w-epi}
	Let $R$ be a domain and $\varphi:R\to T$ a ring homomorphism.  
	We say that $\varphi$ is a \emph{$w$-epimorphism of rings} if the canonical multiplication map
	\[
	\mu:\ T\otimes_R T \longrightarrow T
	\]
	has \GV-torsion kernel and cokernel.  
	Equivalently, for every $\fkm\in w\textrm{-}\Max(R)$, the localized map 
	\[
	\mu_{\fkm}:(T\otimes_R T)_{\fkm}\to T_{\fkm}
	\]
	is an isomorphism, i.e.\ $\varphi_{\fkm}:R_{\fkm}\to T_{\fkm}$ is a ring epimorphism.
\end{definition}

\begin{definition}\label{def:w-flat-epi}
	A ring map $\varphi:R\to T$ is a \emph{$w$-flat epimorphism} if:
	\begin{enumerate}
		\item $T$ is $w$-flat as an $R$-module; and
		\item $\varphi$ is a $w$-epimorphism.
	\end{enumerate}
	Equivalently, for every $\fkm\in w\textrm{-}\Max(R)$, the localization
	\[
	\varphi_{\fkm}:R_{\fkm}\to T_{\fkm}
	\]
	is a \emph{flat epimorphism} of rings.
\end{definition}

\begin{remark}
	When $R$ is a P$v$MD, the localizations $R_{\fkm}$ with $\fkm\in w\textrm{-}\Max(R)$ are valuation (hence Pr\"ufer) domains.
	Thus, the $w$-epimorphism condition reduces to checking flat epimorphism after these localizations.
	In particular, localizations $R\to S^{-1}R$ are flat epimorphisms, and the $w$-version asserts that this holds $w$-locally.
\end{remark}

\medskip

The next result is the $w$-analogue of \cite[Proposition]{S69}:  An integer domain is a Pr\"ufer domain if and only if $R\hookrightarrow T$ is a flat epimorphism for every overring $T$ with $R\subseteq T\subseteq K$.  
After $w$-localizing at each $\fkm\in w\text{-}\Max(R)$, the classical characterization  
``every overring is a flat epimorphism" extends from Pr\"ufer domains to P$v$MDs.

\begin{theorem}\label{thm:pvmd-flat-epi}
	Let $R$ be a domain with quotient field $K$. The following are equivalent:
	\begin{enumerate}
		\item $R$ is a P$v$MD.
		\item For every overring $T$ with $R\subseteq T\subseteq K$ and every $\fkm\in w\text{-}\Max(R)$,
		the localized inclusion
		\[
		\iota_{\fkm}\colon R_{\fkm}\ \longrightarrow\ T_{\fkm}
		\]
		is a \emph{flat ring epimorphism}. Equivalently, the multiplication map
		\[
		\mu_{\fkm}\colon T_{\fkm}\otimes_{R_{\fkm}}T_{\fkm}\ \xrightarrow{\ \cong\ }\ T_{\fkm}
		\]
		is an isomorphism and $T_{\fkm}$ is flat over $R_{\fkm}$.
	\end{enumerate}
	In particular, under \textup{(2)}, each overring $T$ is $w$-flat as an $R$-module, and the global multiplication map
	\[
	\mu:T\otimes_R T\to T
	\]
	has \GV-torsion kernel and cokernel, i.e.\ $\iota:R\hookrightarrow T$ is a $w$-epimorphism of rings.
\end{theorem}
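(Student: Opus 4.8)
The plan is to reduce the P$v$MD property to the classical Pr\"ufer characterization of \cite{S69} after localizing at each $\fkm\in w\text{-}\Max(R)$, exploiting that these localizations are valuation domains precisely when $R$ is a P$v$MD.

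For $(1)\Rightarrow(2)$: assume $R$ is a P$v$MD. By \cite[Theorem 7.5.9]{wk24}, $R_{\fkm}$ is a valuation domain for every $\fkm\in w\text{-}\Max(R)$, in particular a Pr\"ufer domain. Fix an overring $R\subseteq T\subseteq K$. The key observation is that $T_{\fkm}$ is an overring of $R_{\fkm}$ sitting between $R_{\fkm}$ and its quotient field $K$ (localization is exact and $K_{\fkm}=K$). By Silver's characterization \cite[Proposition]{S69}, since $R_{\fkm}$ is Pr\"ufer, the inclusion $R_{\fkm}\hookrightarrow T_{\fkm}$ is a flat ring epimorphism; equivalently $T_{\fkm}$ is $R_{\fkm}$-flat and the multiplication map $T_{\fkm}\otimes_{R_{\fkm}}T_{\fkm}\to T_{\fkm}$ is an isomorphism. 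This is exactly (2).

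For $(2)\Rightarrow(1)$: assuming (2), I would first show every overring $T$ is $w$-flat as an $R$-module. Indeed, $(T\otimes_R T)_{\fkm}\cong T_{\fkm}\otimes_{R_{\fkm}}T_{\fkm}$, and since localization at $w$-maximal ideals commutes with $\Tor$, the $R_{\fkm}$-flatness of $T_{\fkm}$ for all $\fkm$ gives that $\Tor^R_1(N,T)$ is \GV-torsion for all $N$ (apply \cite[Theorem 3.3]{kw14}), i.e.\ $T$ is $w$-flat; likewise $\mu:T\otimes_R T\to T$ has \GV-torsion kernel and cokernel since it is an isomorphism after every $w$-localization, so $\iota:R\hookrightarrow T$ is a $w$-epimorphism — this establishes the ``in particular'' clause simultaneously. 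To conclude $R$ is a P$v$MD, I would argue locally: since the $R_{\fkm}$ are domains for which \emph{every} overring (between $R_{\fkm}$ and $K$) is a flat epimorphism, Silver's theorem \cite[Proposition]{S69} forces each $R_{\fkm}$ to be Pr\"ufer. One subtlety to handle carefully: (2) a priori only supplies overrings of $R_{\fkm}$ of the form $T_{\fkm}$ for $T$ an overring of $R$; but every overring $U$ with $R_{\fkm}\subseteq U\subseteq K$ arises this way (take $T=U\cap K$ as a subring of $K$, and note $T_{\fkm}=U$ since $U$ is already $R\setminus\fkm$-local), so the hypothesis does cover all overrings of $R_{\fkm}$. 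Once each $R_{\fkm}$ is Pr\"ufer (equivalently a valuation domain, being local), $R$ is a P$v$MD by \cite[Theorem 7.5.9]{wk24}.

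The main obstacle is the subtlety just flagged in $(2)\Rightarrow(1)$: verifying that the family of localized overrings $\{T_{\fkm}:R\subseteq T\subseteq K\}$ exhausts all overrings of $R_{\fkm}$ inside $K$, so that Silver's criterion is genuinely applicable. This requires the elementary but essential fact that if $U$ is a ring with $R_{\fkm}\subseteq U\subseteq K$ then $U=(U\cap K_R)_{\fkm}$ where $K_R$ denotes $K$ viewed as an $R$-algebra — intuitively clear since $R\setminus\fkm$ is already invertible in $U$, but worth stating explicitly. Everything else is routine: the commutation of localization with tensor products and $\Tor$, the definition of \GV-torsion via vanishing after $w$-localization, and the direct invocation of \cite[Theorem 7.5.9]{wk24} and \cite[Proposition]{S69}.
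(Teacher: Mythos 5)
Your proposal is correct and follows essentially the same route as the paper: localize at each $\fkm\in w\text{-}\Max(R)$, observe that the rings $T_{\fkm}$ exhaust all overrings of the local domain $R_{\fkm}$ (the subtlety you flag is exactly the paper's step ``$U=T_{\fkm}$ for $T:=U$ viewed as an overring of $R$''), and apply \cite[Proposition]{S69} together with \cite[Theorem 7.5.9]{wk24} in both directions. The only cosmetic difference is that for $(1)\Rightarrow(2)$ the paper notes that overrings of a valuation domain are localizations, whereas you invoke Silver's characterization directly; these are equivalent, and your extra derivation of the ``in particular'' clause is also correct.
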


\begin{proof}
	(1)$\Rightarrow$(2):  
	Fix $\fkm\in w\text{-}\Max(R)$. Since $R$ is a P$v$MD, $R_{\fkm}$ is a valuation domain.  
	For valuation domains, any overring $U$ with $V\subseteq U\subseteq qf(V)$ is a localization of $V$,  
	hence $V\to U$ is a flat ring epimorphism.  
	Applying this to $V=R_{\fkm}$ and $U=T_{\fkm}$ gives the claim.  
	By \cite[Theorem 1.2.19]{G89}, $\mu_{\fkm}$ is an isomorphism, and flatness follows since $T_{\fkm}$ is a localization.  
	Thus, $\iota$ is a $w$-flat epimorphism.
	
	\smallskip
	(2)$\Rightarrow$(1): 
	Fix $\fkm\in w\text{-}\Max(R)$ and let $U$ be any overring of $R_{\fkm}$ in $K$.  
	Then $U=T_{\fkm}$ for the same ring $T:=U$ viewed as an overring of $R$; by hypothesis,  
	$R_{\fkm}\to U$ is a flat ring epimorphism.  
	A classical characterization states that a domain $A$ is Pr\"ufer iff for every overring $B$ of $A$ in $qf(A)$,  
	the inclusion $A\subseteq B$ is a flat epimorphism.  
	Thus, $R_{\fkm}$ is Pr\"ufer for all $\fkm\in w\text{-}\Max(R)$ \cite[Proposition]{S69},  
	and hence $R$ is a P$v$MD.
\end{proof}

\section{Characterizing P$v$MDs via Pure $w$-Injective Modules} 

In this section, we extend a classical result (\cite[Theorem 5]{FM91}) of Fuchs and Meijer, which states that an integral domain $R$ is Pr\"ufer if and only if every pure injective divisible $R$-module is injective, to the setting of P$v$MDs. 
By investigating pure $w$-injective modules, we provide a homological criterion for P$v$MDs, showing that an integral domain is a P$v$MD if and only if every pure $w$-injective divisible module is injective.

Recall from \cite{zkzh25} that an $R$-module $E$ is called \emph{$w$-universal injective} if $E$ is an injective $R$-module and, for any $\GV$-torsion-free $R$-module $A$ and any nonzero element $x \in A$, there exists an $R$-homomorphism $f : A \rightarrow E$ such that $f(x) \neq 0$.

We always set $\widehat{E} = \prod\limits_{\m \in w\text{-}\Max(R)} E(R/\m)$. It follows from \cite[Lemma 2.2, Proposition 2.3]{zkzh25} that $\widehat{E}$ is $\GV$-torsion-free and $w$-universal injective.

\begin{lemma}\label{lem:w-pure-section}
	A $w$-monomorphism $N \rightarrow M$ of $R$-modules is $w$-pure if and only if the natural homomorphism
	\[
	\Hom_{R}(M, \widehat{E}) \twoheadrightarrow \Hom_{R}(N, \widehat{E})
	\]
	admits a section.
\end{lemma}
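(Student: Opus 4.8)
The plan is to reduce the statement to a standard duality fact — that a pure monomorphism is precisely one whose dual (into a large enough injective cogenerator) is a split epimorphism — and then use the localization characterization of $w$-purity. First I would recall the basic tool: by \cite[Theorem 2.5]{XW18}, a $w$-monomorphism $\iota\colon N\to M$ is $w$-pure if and only if, for every finitely presented $R$-module $F$, the induced map $\Hom_R(F,N)\to\Hom_R(F,M)$ has $\GV$-torsion cokernel — equivalently, localizing at each $\fkm\in w\text{-}\Max(R)$, the sequence $0\to N_{\fkm}\to M_{\fkm}$ stays a pure monomorphism of $R_{\fkm}$-modules (here one uses that $(-)_{\fkm}$ commutes with $\Hom$ out of finitely presented modules and that finitely presented $R_{\fkm}$-modules are, up to the relevant purity test, extended from $R$; the filtered-colimit description of purity makes this precise). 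So the problem becomes: check the section condition one $w$-maximal ideal at a time.

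Next I would set up the duality. Since $\widehat E=\prod_{\fkm}E(R/\fkm)$ is $\GV$-torsion-free and $w$-universal injective (\cite[Lemma 2.2, Proposition 2.3]{zkzh25}), applying $\Hom_R(-,\widehat E)$ to $0\to N\xrightarrow{\iota} M$ gives an epimorphism $\Hom_R(M,\widehat E)\twoheadrightarrow\Hom_R(N,\widehat E)$ whenever $\iota$ is a $w$-monomorphism — indeed the kernel of $\iota$ is $\GV$-torsion, hence killed by $\Hom_R(-,\widehat E)$ by Lemma~\ref{cog-tor}-type reasoning (or directly, $\widehat E$ being $\GV$-torsion-free and injective), and then injectivity of $\widehat E$ gives surjectivity of the restriction map. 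The content is the equivalence between this surjection admitting a section and $\iota$ being $w$-pure. For the forward direction: if $\iota$ is $w$-pure, then after localizing at $\fkm$ it is a pure monomorphism of $R_{\fkm}$-modules; since $E(R/\fkm)$ is the minimal injective cogenerator of $R_{\fkm}$-modules and pure-injective, the classical fact ``a pure submonomorphism becomes split after dualizing into a pure-injective cogenerator'' applies, so $\Hom_{R_{\fkm}}(M_{\fkm},E(R/\fkm))\to\Hom_{R_{\fkm}}(N_{\fkm},E(R/\fkm))$ splits; assembling these sections over all $\fkm$ (and using that $\Hom_R(-,\widehat E)$ already carries the product structure and that $\Hom_R(M,E(R/\fkm))\cong\Hom_{R_{\fkm}}(M_{\fkm},E(R/\fkm))$ because $E(R/\fkm)$ is an $R_{\fkm}$-module) yields a global section. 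For the converse: if the surjection $\Hom_R(M,\widehat E)\to\Hom_R(N,\widehat E)$ admits a section $\sigma$, then for any finitely presented $F$ one applies $\Hom_R(F,-)$ and the isomorphism $\Hom_R(F,\Hom_R(M,\widehat E))\cong\Hom_R(F\otimes_R M,\widehat E)$, turning $\sigma$ into a retraction that forces $F\otimes_R N\to F\otimes_R M$ to have $\GV$-torsion kernel — precisely because $\widehat E$ is $w$-universal injective, so a module is $\GV$-torsion iff it has no nonzero maps to $\widehat E$ — which is the $w$-purity criterion of \cite[Theorem 2.5]{XW18}, or one localizes at $\fkm$ and invokes that $E(R/\fkm)$ is a faithful injective cogenerator over $R_{\fkm}$ to detect pure-exactness of $N_{\fkm}\to M_{\fkm}$.

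The main obstacle I anticipate is the bookkeeping in passing between the global object $\widehat E=\prod_{\fkm}E(R/\fkm)$ and the factor-by-factor statements over the valuation-like localizations $R_{\fkm}$: one must be careful that $\Hom_R(M,\widehat E)=\prod_{\fkm}\Hom_R(M,E(R/\fkm))=\prod_{\fkm}\Hom_{R_{\fkm}}(M_{\fkm},E(R/\fkm))$ compatibly with the restriction maps, that a section can be chosen coordinatewise and then reassembled, and — in the converse direction — that the section detects $w$-purity rather than honest purity (this is where $w$-universal injectivity, not mere injective-cogenerator status, of $\widehat E$ is doing the real work, since a $\GV$-torsion kernel is invisible to $\widehat E$ but a genuinely nonzero torsion-free kernel is not). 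Once these identifications are in place, the equivalence follows from the classical pure-injective/cogenerator duality applied locally, and no new homological input is needed beyond \cite[Theorem 2.5]{XW18} and the properties of $\widehat E$ recalled above.
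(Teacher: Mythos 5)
Your proposal is correct in substance, but it splits into two halves of different character relative to the paper. Your converse direction is essentially the paper's: apply $\Hom_R(L,-)$ to the section, use the adjunction $\Hom_R(L,\Hom_R(M,\widehat E))\cong\Hom_R(M\otimes_R L,\widehat E)$, and let $w$-universal injectivity of $\widehat E$ (via \cite[Theorem 2.5]{zkzh25}) detect that the kernel of $N\otimes_R L\to M\otimes_R L$ is $\GV$-torsion. Your forward direction, however, takes a genuinely different route: you localize at each $\fkm\in w\text{-}\Max(R)$, invoke the classical fact that a pure monomorphism dualizes to a split epimorphism into the injective cogenerator $E(R/\fkm)$ of $R_{\fkm}\text{-Mod}$, and reassemble the coordinatewise sections through $\Hom_R(M,\widehat E)\cong\prod_{\fkm}\Hom_{R_{\fkm}}(M_{\fkm},E(R/\fkm))$. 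The paper instead runs the proof of that classical fact \emph{globally}: it shows $\Hom_R(L,\Hom_R(M,\widehat E))\to\Hom_R(L,\Hom_R(N,\widehat E))$ is surjective for every $L$ (the $\GV$-torsion kernel of $N\otimes_R L\to M\otimes_R L$ is invisible to the $\GV$-torsion-free injective $\widehat E$) and then takes $L=\Hom_R(N,\widehat E)$ to lift the identity. This buys you nothing you don't already have and avoids all the bookkeeping you correctly identify as the main obstacle (checking that each $E(R/\fkm)$ is an $R_{\fkm}$-module, that it is the minimal injective cogenerator there, and that sections glue along the product); your route works but carries more overhead. One small correction: the Hom-form of $w$-purity you recall at the outset is garbled --- the criterion concerns the $\GV$-torsion cokernel of $\Hom_R(F,M)\to\Hom_R(F,M/N)$ for finitely presented $F$, not of $\Hom_R(F,N)\to\Hom_R(F,M)$ --- but since your argument actually runs on the tensor characterization, this slip is not load-bearing.
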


\begin{proof}
	Suppose the $w$-monomorphism $f: N \rightarrow M$ is $w$-pure. Then, for any $R$-module $L$, the sequence 
	\[
	0 \rightarrow N \otimes_R L \xrightarrow{f \otimes_R L} M \otimes_R L
	\]
	is $w$-exact. Since $\widehat{E}$ is $\GV$-torsion-free and injective, the induced sequence
	\[
	\Hom_R(M \otimes_R L, \widehat{E}) \rightarrow \Hom_R(N \otimes_R L, \widehat{E}) \rightarrow 0
	\]
	is exact. Consequently, the sequence
	\[
	\Hom_R(L, \Hom_R(M, \widehat{E})) \rightarrow \Hom_R(L, \Hom_R(N, \widehat{E})) \rightarrow 0
	\]
	is exact for any $R$-module $L$. Taking $L = \Hom_R(N, \widehat{E})$, we obtain that the natural homomorphism 
	\[
	\Hom_R(M, \widehat{E}) \twoheadrightarrow \Hom_R(N, \widehat{E})
	\]
	admits a section.
	
	Conversely, assume that the natural homomorphism 
	\[
	\Hom_R(M, \widehat{E}) \twoheadrightarrow \Hom_R(N, \widehat{E})
	\]
	admits a section. Then, for any $R$-module $L$, the sequence
	\[
	\Hom_R(L, \Hom_R(M, \widehat{E})) \rightarrow \Hom_R(L, \Hom_R(N, \widehat{E})) \rightarrow 0
	\]
	is exact. Hence,
	\[
	\Hom_R(M \otimes_R L, \widehat{E}) \rightarrow \Hom_R(N \otimes_R L, \widehat{E}) \rightarrow 0
	\]
	is exact. Since $\widehat{E}$ is $w$-universal injective, it follows from \cite[Theorem 2.5]{zkzh25} that 
	\[
	0 \rightarrow N \otimes_R L \xrightarrow{f \otimes_R L} M \otimes_R L
	\]
	is $w$-exact for any $R$-module $L$. Consequently, the $w$-monomorphism $N \rightarrow M$ is $w$-pure.
\end{proof}

\begin{lemma}\label{w-pure-mono}
	Let $M$ be an $R$-module. Then the natural morphism 
	\[
	\phi_M: M \longrightarrow \Hom_{R}(\Hom_{R}(M, \widehat{E}), \widehat{E})
	\]
	is a $w$-pure $w$-monomorphism.
\end{lemma}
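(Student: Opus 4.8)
The plan is to identify $\phi_M$ with the canonical evaluation map of $M$ into its double $\widehat{E}$-dual, to check that this map is a $w$-monomorphism by computing its kernel precisely, and then to deduce $w$-purity from Lemma~\ref{lem:w-pure-section} by exhibiting an explicit section of the induced map on $\widehat{E}$-duals, the section being furnished by the standard triangle identity of $\widehat{E}$-dualization.

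Write $M^{*}:=\Hom_R(M,\widehat{E})$, so that $\phi_M\colon M\to M^{**}$ is given by $\phi_M(x)(f)=f(x)$ for $x\in M$ and $f\in M^{*}$. The key step is the identification $\ker\phi_M=\tor(M)$. One inclusion is immediate: $\widehat{E}$ is $\GV$-torsion-free, so every homomorphism from a $\GV$-torsion module into $\widehat{E}$ vanishes, whence $f(x)=0$ for all $f\in M^{*}$ whenever $x\in\tor(M)$, giving $\tor(M)\subseteq\ker\phi_M$. For the reverse inclusion, suppose $x\notin\tor(M)$. Then $\tor(Rx)=Rx\cap\tor(M)$, so the image $\bar x$ of $x$ in $Rx/\tor(Rx)$ is nonzero while $Rx/\tor(Rx)$ is $\GV$-torsion-free. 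Since $\widehat{E}$ is $w$-universal injective, there is a homomorphism $g\colon Rx/\tor(Rx)\to\widehat{E}$ with $g(\bar x)\neq 0$; composing with the canonical surjection $Rx\to Rx/\tor(Rx)$ and extending along $Rx\hookrightarrow M$ using the injectivity of $\widehat{E}$ produces $f\in M^{*}$ with $f(x)\neq 0$, so $x\notin\ker\phi_M$. Hence $\ker\phi_M=\tor(M)$ is $\GV$-torsion, i.e.\ $\phi_M$ is a $w$-monomorphism.

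To see that $\phi_M$ is moreover $w$-pure, apply Lemma~\ref{lem:w-pure-section} to the $w$-monomorphism $\phi_M\colon M\to M^{**}$: it suffices to show that the natural homomorphism $\Hom_R(\phi_M,\widehat{E})\colon\Hom_R(M^{**},\widehat{E})\to\Hom_R(M,\widehat{E})$, $g\mapsto g\circ\phi_M$, admits a section. The section is $\phi_{M^{*}}\colon M^{*}\to (M^{*})^{**}=\Hom_R(M^{**},\widehat{E})$, for if $f\in M^{*}$ and $x\in M$ then
\[
\big(\Hom_R(\phi_M,\widehat{E})(\phi_{M^{*}}(f))\big)(x)=\big(\phi_{M^{*}}(f)\circ\phi_M\big)(x)=\phi_{M^{*}}(f)(\phi_M(x))=\phi_M(x)(f)=f(x),
\]
so that $\Hom_R(\phi_M,\widehat{E})\circ\phi_{M^{*}}=\Id_{M^{*}}$. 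Thus this natural map is surjective and split, and Lemma~\ref{lem:w-pure-section} yields that $\phi_M$ is $w$-pure, completing the proof.

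The only step requiring care is the inclusion $\ker\phi_M\subseteq\tor(M)$: there one must pass to the cyclic submodule $Rx$, use $\tor(Rx)=Rx\cap\tor(M)$ together with the fact that a quotient by its $\GV$-torsion submodule is $\GV$-torsion-free, and then combine the $w$-universal injectivity of $\widehat{E}$ (to separate $\bar x$) with its ordinary injectivity (to extend the separating homomorphism from $Rx$ back to $M$). Everything else --- the biduality identity and the appeal to Lemma~\ref{lem:w-pure-section} --- is formal.
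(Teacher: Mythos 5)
Your proof is correct and follows essentially the same route as the paper: establish that $\phi_M$ is a $w$-monomorphism using the $w$-universal injectivity of $\widehat{E}$, then obtain $w$-purity from Lemma~\ref{lem:w-pure-section} via the triangle identity $\Hom_R(\phi_M,\widehat{E})\circ\phi_{M^{*}}=\Id_{M^{*}}$. The only difference is that where the paper reduces to $M/T$ and cites an external result to see that $\phi_{M/T}$ is injective, you verify $\ker\phi_M=\tor(M)$ directly from the definition of $w$-universal injectivity (passing through cyclic submodules and extending by injectivity of $\widehat{E}$), which is a correct and slightly more self-contained treatment of that step.
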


\begin{proof}
	First, let $T$ denote the $\GV$-torsion submodule of $M$. Consider the following commutative diagram with exact rows, where $(-,-) := \Hom_R(-,-)$:
	\[
	\xymatrix@C=30pt@R=30pt{
		0 \ar[r] & T \ar[d]^{\phi_T} \ar[r] & M \ar[d]^{\phi_M} \ar[r]^{\pi} & M/T \ar[d]^{\phi_{M/T}} \ar[r] & 0 \\
		0 \ar[r] & ((T, \widehat{E}), \widehat{E}) \ar[r] & ((M, \widehat{E}), \widehat{E}) \ar[r]^{\cong} & ((M/T, \widehat{E}), \widehat{E}) \ar[r] & 0
	}
	\]
	It follows from \cite[Theorem 2.5]{zkzh25} that $\phi_{M/T}$ is a monomorphism. Hence, $\phi_M$ is a $w$-monomorphism.
	
	Next, we show that $\phi_M$ is $w$-pure. By Lemma~\ref{lem:w-pure-section}, it suffices to show that $\Hom_R(\phi_M, \widehat{E})$ admits a section. Indeed, the map
	\[
	\phi_{\Hom_R(M, \widehat{E})}: \Hom_R(M, \widehat{E}) \longrightarrow \Hom_R(\Hom_R(\Hom_R(M, \widehat{E}), \widehat{E}), \widehat{E})
	\]
	provides such a section.
\end{proof}

\begin{definition}
	An $R$-module $E$ is said to be \emph{pure $w$-injective} if it is injective with respect to all $w$-pure $w$-monomorphisms $f: N \rightarrow M$.
\end{definition}

It is easy to check that the class of pure $w$-injective modules is closed under direct products and direct summands.

\begin{proposition}\label{pw-injchar}
	A $\GV$-torsion-free $R$-module $M$ is pure $w$-injective if and only if it is a direct summand of an $R$-module of the form $\Hom_R(L, \widehat{E})$ for some $R$-module $L$.
\end{proposition}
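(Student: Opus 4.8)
The plan is to prove both implications by isolating the role of the injective cogenerator $\widehat E$, exactly as in the classical characterization of pure-injective modules as summands of character modules.

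First I would treat the ``if'' direction. Suppose $M$ is a direct summand of $\Hom_R(L,\widehat E)$ for some $R$-module $L$. Since the class of pure $w$-injective modules is closed under direct summands, it suffices to show $\Hom_R(L,\widehat E)$ is pure $w$-injective. Let $f\colon N\to M'$ be a $w$-pure $w$-monomorphism and let $g\colon N\to \Hom_R(L,\widehat E)$ be any map. By the usual Hom-tensor adjunction, $g$ corresponds to a map $\bar g\colon L\otimes_R N\to\widehat E$ (or equivalently $N\to\Hom_R(L,\widehat E)$ corresponds to $L\otimes_R N\to \widehat E$ via adjointness, using the isomorphism $\Hom_R(N,\Hom_R(L,\widehat E))\cong\Hom_R(L\otimes_R N,\widehat E)\cong\Hom_R(L,\Hom_R(N,\widehat E))$). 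Because $f$ is $w$-pure, the sequence $0\to L\otimes_R N\xrightarrow{L\otimes f} L\otimes_R M'$ is $w$-exact; since $\widehat E$ is $\GV$-torsion-free and injective, applying $\Hom_R(-,\widehat E)$ turns this $w$-exact sequence into an honest surjection $\Hom_R(L\otimes_R M',\widehat E)\twoheadrightarrow\Hom_R(L\otimes_R N,\widehat E)$ (this is precisely the computation already carried out in the proof of Lemma~\ref{lem:w-pure-section}). Lifting $\bar g$ through this surjection and adjoining back gives an extension of $g$ along $f$. Hence $\Hom_R(L,\widehat E)$, and therefore $M$, is pure $w$-injective.

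For the ``only if'' direction I would invoke Lemma~\ref{w-pure-mono}: the natural map $\phi_M\colon M\to\Hom_R(\Hom_R(M,\widehat E),\widehat E)$ is a $w$-pure $w$-monomorphism. Writing $L:=\Hom_R(M,\widehat E)$, the target is $\Hom_R(L,\widehat E)$, which by the ``if'' direction just proved is pure $w$-injective. Now suppose $M$ is $\GV$-torsion-free and pure $w$-injective. Since $M$ is pure $w$-injective and $\phi_M$ is a $w$-pure $w$-monomorphism, the identity $\Id_M$ extends along $\phi_M$ to a map $r\colon\Hom_R(L,\widehat E)\to M$ with $r\circ\phi_M=\Id_M$. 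Therefore $\phi_M$ splits and $M$ is a direct summand of $\Hom_R(L,\widehat E)=\Hom_R(\Hom_R(M,\widehat E),\widehat E)$, which has the required form.

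The main obstacle is making the adjunction bookkeeping in the ``if'' direction fully precise: one must check that the extension $\tilde g\colon M'\to\Hom_R(L,\widehat E)$ obtained by lifting the character $L\otimes_R N\to\widehat E$ and re-adjoining genuinely restricts to $g$ on $N$, i.e.\ that all the adjunction isomorphisms are compatible with the maps induced by $f$. This is a diagram chase through the naturality squares for the adjunction $(-\otimes_R L,\Hom_R(L,-))$ together with the surjectivity statement extracted from $w$-purity; the $\GV$-torsion-freeness and injectivity of $\widehat E$ are what guarantee that $w$-exactness of $0\to L\otimes_R N\to L\otimes_R M'$ upgrades to exactness after $\Hom_R(-,\widehat E)$, and the role of $\widehat E$ being $w$-universal injective (needed in Lemma~\ref{lem:w-pure-section}) does not enter here, only in the converse direction via that lemma. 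Everything else is formal.
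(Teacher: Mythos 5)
Your proposal is correct and follows essentially the same route as the paper: the ``only if'' direction splits $\phi_M$ using Lemma~\ref{w-pure-mono} and pure $w$-injectivity, and the ``if'' direction shows $\Hom_R(L,\widehat E)$ is pure $w$-injective via the adjunction $\Hom_R(-,\Hom_R(L,\widehat E))\cong\Hom_R(-\otimes_R L,\widehat E)$ together with the $\GV$-torsion-freeness and injectivity of $\widehat E$, then closes under direct summands. The ``adjunction bookkeeping'' you worry about is just the naturality of the Hom-tensor isomorphism, which the paper dispatches with a single ``equivalently''.
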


\begin{proof}
	Let $M$ be a $\GV$-torsion-free pure $w$-injective $R$-module. By Lemma~\ref{w-pure-mono}, the natural morphism
	\[
	\phi_M: M \longrightarrow \Hom_R(\Hom_R(M, \widehat{E}), \widehat{E})
	\]
	is a $w$-pure $w$-monomorphism. Since $M$ is pure $w$-injective, this map splits, and hence $M$ is a direct summand of $\Hom_R(\Hom_R(M, \widehat{E}), \widehat{E})$.
	
	Conversely, we show that any module of the form $\Hom_R(L, \widehat{E})$ is pure $w$-injective and $\GV$-torsion-free. Let $f: N \rightarrowtail M$ be a $w$-pure monomorphism and let $L$ be an $R$-module. Then the sequence
	\[
	0 \rightarrow N \otimes_R L \xrightarrow{f \otimes_R L} M \otimes_R L
	\]
	is $w$-exact. Since $\widehat{E}$ is $\GV$-torsion-free and injective, the induced sequence
	\[
	\Hom_R(M \otimes_R L, \widehat{E}) \longrightarrow \Hom_R(N \otimes_R L, \widehat{E}) \longrightarrow 0
	\]
	is exact. Equivalently, 
	\[
	\Hom_R(M, \Hom_R(L, \widehat{E})) \longrightarrow \Hom_R(N, \Hom_R(L, \widehat{E})) \longrightarrow 0
	\]
	is exact. Consequently, $\Hom_R(L, \widehat{E})$ is pure $w$-injective and, trivially, $\GV$-torsion-free.
\end{proof}

\begin{theorem} \label{thm:pvmd-winj}
	Let $R$ be an integral domain. Then $R$ is a P$v$MD if and only if every pure $w$-injective divisible $R$-module is injective.
\end{theorem}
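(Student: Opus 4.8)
The plan is to prove both implications by localizing at the $w$-maximal ideals and feeding each localization into the classical Fuchs--Meijer theorem \cite{FM91}. The whole argument rests on two elementary transfer facts. First, a $w$-pure $w$-monomorphism $f\colon N\to M$ of $R$-modules localizes, at every $\fkm\in w\text{-}\Max(R)$, to a pure monomorphism $f_{\fkm}\colon N_{\fkm}\to M_{\fkm}$ of $R_{\fkm}$-modules --- and, conversely, a monomorphism of $R$-modules that becomes pure after every such localization is $w$-pure; this is immediate from the paper's definition, since $w$-exactness means exactness after localizing at each $\fkm\in w\text{-}\Max(R)$, a module is $\GV$-torsion iff all those localizations vanish, and every $R_{\fkm}$-module has the form $L_{\fkm}$. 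Second, for any $R_{\fkm}$-module $D$ there is a natural isomorphism $\Hom_R(M,D)\cong\Hom_{R_{\fkm}}(M_{\fkm},D)$ (restriction of scalars along $R\to R_{\fkm}$), compatible with maps induced by a homomorphism $f\colon N\to M$.

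For the forward direction, assume $R$ is a P$v$MD and let $E$ be a pure $w$-injective divisible $R$-module. I would embed $E$ into an injective $R$-module $Q$ and show that $E\hookrightarrow Q$ is a $w$-pure $w$-monomorphism. Fix $\fkm\in w\text{-}\Max(R)$; since $R$ is a P$v$MD, $R_{\fkm}$ is a valuation domain, and $E_{\fkm}$ is divisible over $R_{\fkm}$. Over a valuation domain every finitely presented module is a direct sum of cyclically presented modules $R_{\fkm}/aR_{\fkm}$ \cite{FS01}, so a divisible module is fp-injective (absolutely pure); hence $E_{\fkm}$ is pure in $Q_{\fkm}$. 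As $\fkm$ was arbitrary, $E\hookrightarrow Q$ is $w$-pure by the first transfer fact. Because $E$ is pure $w$-injective, this $w$-pure $w$-monomorphism splits, so $E$ is a direct summand of the injective module $Q$ and is therefore injective.

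For the converse I argue contrapositively. If $R$ is not a P$v$MD, then $R_{\fkm}$ fails to be a valuation --- hence a Pr\"ufer --- domain for some $\fkm\in w\text{-}\Max(R)$, so by Fuchs--Meijer \cite{FM91} there is a pure-injective divisible $R_{\fkm}$-module $D$ that is not injective over $R_{\fkm}$. Viewed as an $R$-module, $D$ is still divisible, since every nonzero $r\in R$ has nonzero image in $R_{\fkm}$ and $D$ is $R_{\fkm}$-divisible. It is pure $w$-injective over $R$: given a $w$-pure $w$-monomorphism $f\colon N\to M$ of $R$-modules, $f_{\fkm}$ is a pure monomorphism over $R_{\fkm}$, and under the second transfer fact the map $\Hom_R(M,D)\to\Hom_R(N,D)$ is identified with $\Hom_{R_{\fkm}}(M_{\fkm},D)\to\Hom_{R_{\fkm}}(N_{\fkm},D)$, which is surjective because $D$ is pure-injective over $R_{\fkm}$. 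Finally $D$ is not injective over $R$, for otherwise $D\cong D_{\fkm}$ would be injective over $R_{\fkm}$. Thus $D$ is a pure $w$-injective divisible $R$-module that is not injective, contradicting the hypothesis, so $R$ is a P$v$MD.

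The step I expect to require the most care is the forward direction's use of ``divisible $\Rightarrow$ absolutely pure'' over the valuation localizations $R_{\fkm}$, which relies on the decomposition of finitely presented modules over valuation domains, together with the clean verification that the paper's notion of a $w$-pure $w$-monomorphism coincides with ``a monomorphism that becomes pure after localizing at every $w$-maximal ideal'' (so that $w$-purity can legitimately be checked $w$-locally). Once these two points are in place, the rest is routine bookkeeping with the functors $(-)_{\fkm}$ and $\Hom_R(-,D)$ and an application of Fuchs--Meijer localization by localization.
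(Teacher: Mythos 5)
Your proof is correct, but it takes a genuinely different route from the paper's. The paper stays inside the $w$-module framework: for the forward direction it invokes \cite[Theorem 2.10]{XW18} (over a P$v$MD every divisible module is absolutely $w$-pure) so that $M\hookrightarrow E(M)$ is a $w$-pure monomorphism and splits by pure $w$-injectivity; for the converse it embeds an arbitrary divisible module $M$ into its double dual $\Hom_R(\Hom_R(M,\widehat{E}),\widehat{E})$, which is divisible and pure $w$-injective by Proposition~\ref{pw-injchar} and hence injective by hypothesis, and a pushout argument then shows that every embedding of $M$ is $w$-pure, so that \cite[Theorem 2.10]{XW18} applies again. You instead reduce both directions to the classical Fuchs--Meijer theorem over the localizations $R_{\fkm}$, resting on two transfer facts: that a ($w$-)monomorphism is $w$-pure iff it becomes pure after localizing at every $\fkm\in w\text{-}\Max(R)$ (this does follow from the definition of $w$-exactness, the fact that a module is $\GV$-torsion iff its localizations at all $w$-maximal ideals vanish, and the fact that every $R_{\fkm}$-module is of the form $L_{\fkm}$), and the adjunction $\Hom_R(M,D)\cong\Hom_{R_{\fkm}}(M_{\fkm},D)$ for $R_{\fkm}$-modules $D$. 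The auxiliary claims also check out: divisible equals fp-injective (hence absolutely pure) over a valuation domain via Warfield's decomposition of finitely presented modules, and an $R_{\fkm}$-module that is injective over $R$ is injective over $R_{\fkm}$ because $\Hom_R(B,D)=\Hom_{R_{\fkm}}(B,D)$ for $R_{\fkm}$-modules $B,D$. What your approach buys is independence from the $\widehat{E}$-machinery of Lemmas~\ref{lem:w-pure-section} and~\ref{w-pure-mono} and Proposition~\ref{pw-injchar} and from \cite{XW18}, at the cost of using the full strength of Fuchs--Meijer in both directions; the paper's argument is self-contained in the $w$-setting and its intermediate results on pure $w$-injective modules are of independent interest.
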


\begin{proof}
	Suppose first that $R$ is a P$v$MD. Let $M$ be a pure $w$-injective divisible $R$-module. By \cite[Theorem 2.10]{XW18}, $M$ is absolutely $w$-pure. Consider the natural embedding 
	\[
	i: M \hookrightarrow E(M),
	\] 
	where $E(M)$ is the injective envelope of $M$. It follows from \cite[Theorem 2.6]{XW18} that $i$ is a $w$-pure monomorphism, and hence splits since $M$ is pure $w$-injective. Therefore, $M$ is injective.
	
	Conversely, assume that every pure $w$-injective divisible module is injective. Let $M$ be a divisible $R$-module, and consider an exact sequence 
	\[
	0 \longrightarrow M \xrightarrow{f} A \xrightarrow{g} B \longrightarrow 0
	\]
	of $R$-modules. Consider the following commutative diagram of exact sequences, where $(-,-):=\Hom_R(-,-)$:
	\[
	\xymatrix@C=30pt@R=30pt{
		0 \ar[r] & M \ar[d]^{\phi_M} \ar[r]^{f} & A \ar[d]^{a} \ar[r]^{g} & B \ar@{=}[d] \ar[r] & 0 \\
		0 \ar[r] & ((M, \widehat{E}), \widehat{E}) \ar[r]^{b} & C \ar[r] & B \ar[r] & 0 
	}
	\]
	
	By Lemma~\ref{w-pure-mono}, $\phi_M$ is a $w$-monomorphism. Note that $\Hom_R(\Hom_R(M, \widehat{E}), \widehat{E})$ is divisible since $M$ is divisible. By Proposition~\ref{pw-injchar}, it is also pure $w$-injective, and therefore injective by our assumption. Consequently, the bottom row of the diagram splits.
	
	It follows from Lemma~\ref{w-pure-mono} that $\phi_M$ is a $w$-pure $w$-monomorphism. By the proof of \cite[Proposition 2.2(1)]{L22}, the composition $b \circ \phi_M$ is a $w$-pure $w$-monomorphism, and hence so is $f$ by \cite[Proposition 2.2(2)]{L22}. Therefore, $M$ is absolutely $w$-pure. By \cite[Theorem 2.10]{XW18}, this implies that $R$ is a P$v$MD.
\end{proof}


\bigskip

\noindent {\bf Fund:}
H. Kim was supported by Basic Science Research Program through the National Research Foundation of Korea(NRF) funded by the Ministry of Education (2021R1I1A3047469).

\end{document}